\numberwithin{equation}{section}
\theoremstyle{plain}
\newtheorem{theorem}{Theorem}[section]
\newtheorem{proposition}[theorem]{Proposition}
\newtheorem{corollary}[theorem]{Corollary}
\newtheorem{lemma}[theorem]{Lemma}
\newtheorem{hypothesis}[theorem]{Hypothesis}
\theoremstyle{definition}
\newtheorem{definition}[theorem]{Definition}
\newtheorem{example}[theorem]{Example}
\theoremstyle{remark}
\newtheorem*{remark}{Remark}
\newcommand\cut[1]{}
\newcommand\further[1]{}
\begin{document}
	
	\title[Distribution of the Bad Part of Class Groups]{Distribution of the Bad Part of Class Groups}
	\date{}
	\author{Weitong Wang}
	\address{Shing-Tung Yau Center of Southeast University, 15th floor Yifu Architecture Building, 2 Sipailou, Nanjing, Jiangsu Province 210096 China}
	\email{wangweitong@seu.edu}	
	
	\begin{abstract}
		Let $G$ be a finite group.
		The Cohen-Lenstra-Martinet Heuristics give a prediction of the distribution of $\operatorname{Cl}_K[p^\infty]$ when $K$ runs over $G$-fields and $p\nmid\lvert G\rvert$.
		In this paper, we prove several results on the distribution of ideal class groups for some $p\mid\lvert G\rvert$,
		and show that the behaviour is qualitatively different than what is predicted by the heuristics when $p\nmid\lvert G\rvert$.
		We do this by using genus theory and the invariant part of the class group to investigate the algebraic structure of the class group.
		For general number fields, our result is conditional on a natural conjecture on counting fields.
		For abelian or $D_4$-fields, our result is unconditional.
	\end{abstract}
	
	\keywords{Cohen-Lenstra heuristics, distribution of class groups, field counting}
	
	
	
	\maketitle

\section{Introduction}\label{section:intro}
Let's start with the basic notions of this paper.
First of all, there are multiple ways to describe field extensions, we give the following definitions to make terms like ``the set of all non-Galois cubic number fields'' precise. 
\begin{definition}\label{def:Gamma-fields}
	Let $k$ be a number field, and let $G$ be a finite group.
	By a \emph{$G$-extension of $k$}, we mean an isomorphism class of pairs $(K,\psi)$, where $K$ is a Galois extension of $k$, and $\psi:G(K/k)\cong G$ is an isomorphism.  
	An isomorphism of pairs $(\alpha,m_\alpha):(K,\psi)\to(K',\psi')$ is an isomorphism $\alpha: K\to K'$ such that the map $m_\alpha: G(K/k)\to G(K'/k)$ sending $\sigma$ to $\alpha\circ\sigma\circ\alpha^{-1}$ satisfies $\psi'\circ m_\alpha=\psi$.  
	We sometimes leave the $\psi$ implicit.
	If the base field $k=\mathbb{Q}$, we just call $G$-extensions of $\mathbb{Q}$ \emph{$G$-fields.}
\end{definition}
\begin{definition}\label{def:set of fields}
	Let $G\subseteq S_n$ whose action on $\{1,2,\dots,n\}$ is transitive, i.e., a transitive permutation group.
	Let $k$ be a number field.
	Let ${\mathcal{S}}(G,k)$ be the set of pairs $(K,\psi)$ such that the Galois closure $(\hat{K},\psi)$ is a $G$-extension of $k$ and that $K=\hat{K}^{\operatorname{Stab}(1)}$, where $\operatorname{Stab}(1)\subseteq G$ is the image of stabilizer of $1$.
	In other words, $\psi$ defines the Galois action of $G$ on the $k$-embeddings $K\to\mathbb{C}$ and $[K:k]=n$.
	If the base field $k=\mathbb{Q}$, then we just omit it and write ${\mathcal{S}}(G):={\mathcal{S}}(G,\mathbb{Q})$.
\end{definition}
Then, we give the notation of counting number fields.
\begin{definition}\label{def:counting number fields}
	Let $\mathcal{S}$ be a set of fields equipped with a fixed counting function of number fields $C:\mathcal{S}\to\mathbb{R}^+$ (e.g. discriminant or product of ramified primes).
	Define
	\begin{equation*}
		N_{\mathcal{S},C}(X):=\#\{K\in\mathcal{S}\mid C(K)<X\}.
	\end{equation*}
\end{definition}
Now we can define the notations of probability and moments.
We in particular care about the $p$-rank of class groups in the sense of statistics.
Let $A$ be a finite abelian group, and let $p$ be a rational prime.
Define the $p$-rank of $A$, denoted by $\operatorname{rk}_pA$, as the largest number $r$ so that there exists some injective group homomorphism $C_p^r\to A$.
Let $\mathcal{S}$ be a set of fields with a counting function $C$.
For each non-negative integer $r$, define
\begin{equation*}
	\mathbb{P}_{\mathcal{S},C}(\operatorname{rk}_p\operatorname{Cl}_K\leq r):=\lim_{X\to\infty}\frac{\#\{K\in\mathcal{S}\mid C(K)<X\text{ and }\operatorname{rk}_p\operatorname{Cl}_K\leq r\}}{N_{\mathcal{S},C}(X)},
\end{equation*}
and call it the probability of $\operatorname{rk}_p\operatorname{Cl}_K\leq r$.
Define also the $A$-moment of $\operatorname{Cl}_K$ to be
\begin{equation*}
	\mathbb{E}_{\mathcal{S},C}(\lvert\operatorname{Hom}(\operatorname{Cl}_K,A)\rvert):=
	\lim_{X\to\infty}
	\frac{
		\sum_{
			\substack{
				K\in\mathcal{S}\\
				C({K})<X
				}
			}
		\lvert\operatorname{Hom}(\operatorname{Cl}_{K},A)\rvert
		}{
		N_{\mathcal{S},C}(X)
	}.
\end{equation*}
When $\mathcal{S}$ is the set of Galois $G$-extensions with a fixed decomposition group at infinity, the class groups are equipped with an action from $G$.
If $p\nmid\lvert G\rvert$ is a rational prime, the \emph{Cohen-Lenstra-Martinet Heuristics} gives the prediction of the distribution of the $\mathbb{Z}_p[G]$-module $\operatorname{Cl}_K[p^\infty]:=\operatorname{Cl}_K\otimes\mathbb{Z}_p$ and the $M$-moments where $M$ is a finite $\mathbb{Z}_p[G]$-module.
See Cohen and Martinet~\cite{CM90}.
The method of Cohen and Martinet could be applied to the non-Galois cases.
See W. and Wood~\cite{wang2021moments}.
In the original Cohen-Lenstra-Martinet heuristics, fields are ordered by discriminant, which was an obvious ordering for number fields.
Now we have the question of what kind of ordering one should put on the fields.
In some cases, ordering fields by discriminant will contradict what is predicted by the heuristics.
See Bartel and Lenstra~\cite{bartel2020class} for example.
The counting function of number fields in this paper we mainly use is the product of ramified primes, which is usually denoted by $P(K)$ for a number field $K$.
See Wood~\cite{wood2010probabilities} for more discussions on different orderings.

Now let's give an example when the prime divides the order of the Galois group, which could be thought of as a prototype.
The genus theory for quadratic number fields implies the following:
\begin{equation}\label{eqn:genus theory for quadratic number fields}
    \omega(P(K))-1\leq\operatorname{rk}_2\operatorname{Cl}_K\leq\omega(P(K)),
\end{equation}
where $\omega(n)$ is the number of distinct prime divisors for $n\in\mathbb{Z}$.
We'll make a brief introduction to genus theory later.
See also Ishida~\cite{ishida1976genus}.
For now, using the inequality~(\ref{eqn:genus theory for quadratic number fields}) above, we can show that the distribution of the group $\operatorname{Cl}_K[2^\infty]$ cannot be described by Cohen-Lenstra Heuristics in the following sense.
First, roughly speaking, the inequality~(\ref{eqn:genus theory for quadratic number fields}) basically means that the number of ramified primes determines $\operatorname{rk}_2\operatorname{Cl}_K$ (up to $1$).
Second, technically, if we let $\mathcal{S}$ be the set of all quadratic number fields, then for each non-negative integer $r$, we have
\begin{equation*}
	\mathbb{P}_{\mathcal{S},P}(\operatorname{rk}_2\operatorname{Cl}_K\leq r)=0\quad\text{and}\quad\mathbb{E}_{\mathcal{S},P}(\lvert\operatorname{Hom}(\operatorname{Cl}_K,C_2)\rvert)=+\infty,
\end{equation*}
which is qualitatively different from what is predicted by the heuristics.
For example, the $C_3$-moment of quadratic number fields exists as a finite number.
See Davenport and Heilbronn~\cite{Davenport1971Cubic}.
According to this example, the statistical behaviour of $\operatorname{Cl}_K[2^\infty]$ should be reconsidered.
See Gerth's conjecture~\cite{Gerth1987} and Alex Smith's proof for the quadratic case~\cite{smith2022distribution} for more details.

Based on this example, an idea for the study of distribution of class groups is to apply the genus theory for general number fields and try to obtain similar results.
The introduction of genus theory is given in Section~\ref{section:non-randomness} following Ishida~\cite[Chapter 4]{ishida1976genus}.
Here we present the main result in a brief way.
\begin{definition}\label{def: e(p) and genus group}
	Let $K/\mathbb{Q}$ be a number field.
	\begin{enumerate}
		\item For each rational prime $p$, if we have ideal factorization $p\mathscr{O}_K={\mathfrak{p}}_1^{e_1}\cdots{\mathfrak{p}}_n^{e_n}$, then define
		\begin{equation*}
			e_K(p):=\gcd(e_1,\dots,e_n)
		\end{equation*}
		to be the greatest common divisor of its ramification indices.
		\item Define the \emph{genus group} $\mathcal{G}$ of $K$ to be the Galois group of the maximal unramified extension $Kk/K$ obtained by composing with an abelian number field $k/\mathbb{Q}$.
	\end{enumerate}	
\end{definition}
Then for a rational prime $q$, we have the following estimate on the $q$-rank of the the genus group.
\begin{theorem}\label{thm:S1 genus theory}
Let \(K/\mathbb{Q}\) be a number field with maximal abelian subextension $K_0/\mathbb{Q}$.
Fix a rational prime \(q\) dividing $n:=[K:\mathbb{Q}]$.
Then, the \(q\)-rank of the genus group \(\mathcal{G}\) admits the following inequality
\[\operatorname{rk}_q\mathcal{G}\geq\#\{p\mid e_K(p)\equiv0\bmod{q}\text{ and }p\equiv1\bmod{q}\}-\operatorname{rk}_q G(K_0/\mathbb{Q}).\]
\end{theorem}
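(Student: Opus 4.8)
\emph{Strategy.}
Recall that $\mathcal{G}=\operatorname{Gal}(Kk/K)$, where $k$ is the maximal abelian extension of $\mathbb{Q}$ such that $Kk/K$ is unramified (a well-defined maximum, since the compositum of two such extensions is again one). The plan is to exhibit a single such $k_0$ for which $\operatorname{rk}_q\operatorname{Gal}(Kk_0/K)$ already meets the claimed bound: restriction gives a surjection $\mathcal{G}\twoheadrightarrow\operatorname{Gal}(Kk_0/K)$, and a surjection of finite abelian groups does not decrease the $q$-rank, so this suffices.

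\emph{Construction of $k_0$ and the key local claim.}
Let $S=\{p : p\text{ is a \ram{prime}{$q$} and }p\equiv1\bmod q\}$ and $s=\#S$. For $p\in S$, the group $\operatorname{Gal}(\mathbb{Q}(\zeta_p)/\mathbb{Q})$ is cyclic of order $p-1$ with $q\mid p-1$, so there is a unique subfield $F_p\subseteq\mathbb{Q}(\zeta_p)$ with $[F_p:\mathbb{Q}]=q$; it is cyclic, ramified only at $p$, and (as $\mathbb{Q}(\zeta_p)/\mathbb{Q}$ is totally ramified at $p$ and $q<p$) totally and tamely ramified there. Let $P$ denote the unique prime of $F_p$ above $p$. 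The fields $F_p$ ($p\in S$) are pairwise linearly disjoint over $\mathbb{Q}$ (each has prime degree $q$ and is ramified at a prime the others miss), so $k_0:=\prod_{p\in S}F_p$ satisfies $\operatorname{Gal}(k_0/\mathbb{Q})\cong(\mathbb{Z}/q\mathbb{Z})^{s}$. I claim $Kk_0/K$ is unramified at all finite primes. Away from $S$ this is immediate. Fix $p\in S$ and a prime $\mathfrak{p}$ of $K$ above $p$, and set $e:=e(\mathfrak{p}/p)$; since $p$ is of type $q$ we have $q\mid e$, hence $q$ divides the prime-to-$p$ part $e'$ of $e$. Passing to $\mathbb{Q}_p^{\mathrm{ur}}$, whose residue field is algebraically closed so that tame extensions of it are unique in each degree prime to $p$: the maximal tamely ramified subextension of $K_\mathfrak{p}\mathbb{Q}_p^{\mathrm{ur}}/\mathbb{Q}_p^{\mathrm{ur}}$ is $\mathbb{Q}_p^{\mathrm{ur}}(p^{1/e'})\supseteq\mathbb{Q}_p^{\mathrm{ur}}(p^{1/q})$, while $(F_p)_P\mathbb{Q}_p^{\mathrm{ur}}$, being a totally tamely ramified degree-$q$ extension of $\mathbb{Q}_p^{\mathrm{ur}}$, equals $\mathbb{Q}_p^{\mathrm{ur}}(p^{1/q})$. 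Hence $(F_p)_P\subseteq K_\mathfrak{p}\mathbb{Q}_p^{\mathrm{ur}}$, so $K_\mathfrak{p}(F_p)_P/K_\mathfrak{p}$ is unramified; since the remaining local factors of $k_0$ at $\mathfrak{p}$ come from the $F_{p'}$ with $p'\neq p$, which are unramified at $p$, the extension $Kk_0/K$ is unramified at $\mathfrak{p}$. (Unramifiedness at the archimedean places is either automatic or built into the normalization of $\mathcal{G}$, depending on convention, and is irrelevant below.)

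\emph{Rank bookkeeping.}
From $Kk_0/K$ unramified we get $k_0\subseteq k$, hence $\operatorname{rk}_q\mathcal{G}\ge\operatorname{rk}_q\operatorname{Gal}(Kk_0/K)=\operatorname{rk}_q\operatorname{Gal}(k_0/k_0\cap K)$ by the usual isomorphism for a compositum. The field $k_0\cap K$ is abelian over $\mathbb{Q}$ and contained in $K$, hence contained in $K_0$, so $k_0\cap K=k_0\cap K_0=:L_0$. Since $\operatorname{Gal}(k_0/\mathbb{Q})\cong(\mathbb{Z}/q\mathbb{Z})^{s}$, all of its subgroups and quotients are elementary abelian, so $\operatorname{rk}_q\operatorname{Gal}(k_0/L_0)=s-t$ where $q^{t}=[L_0:\mathbb{Q}]$; and $\operatorname{Gal}(L_0/\mathbb{Q})$ is a quotient of $\operatorname{Gal}(K_0/\mathbb{Q})$ because $L_0\subseteq K_0$, whence $t\le\operatorname{rk}_q\operatorname{Gal}(K_0/\mathbb{Q})$. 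Combining, $\operatorname{rk}_q\mathcal{G}\ge s-t\ge s-\operatorname{rk}_q\operatorname{Gal}(K_0/\mathbb{Q})$, which is exactly the assertion.

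\emph{Where the difficulty lies.}
The only nonformal ingredient is the local step: a totally tamely ramified degree-$q$ extension of $\mathbb{Q}_p$ is absorbed into the maximal unramified extension of $K_\mathfrak{p}$ once $q\mid e(\mathfrak{p}/p)$. This is precisely where both hypotheses on $p$ — that it is of type $q$ (giving $q\mid e$ for \emph{every} prime above $p$) and that $p\equiv1\bmod q$ (making $F_p$ totally tamely ramified at $p$) — are used. Everything else is the Galois correspondence together with monotonicity of $q$-rank along surjections and quotients.
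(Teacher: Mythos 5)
Your proof is correct in substance but takes a genuinely different route from the paper's. The paper invokes Ishida's structure theorem for $A=\operatorname{Gal}(k/\mathbb{Q})$ — the decomposition $k=k_1k_2$ with $k_1$ the compositum of the $k(p)$'s, giving $A$ a subgroup $\prod_{p\nmid e_K(p)}\mathbb{Z}/\gcd(p-1,e_K(p))$ — and then reads off the bound from the exact sequence $1\to\mathcal{G}\to A\to\operatorname{Gal}(K_0/\mathbb{Q})\to 1$. You instead construct a witness $k_0\subseteq k$ by hand and verify $Kk_0/K$ unramified from scratch; your local tame-descent step ($\mathbb{Q}_p^{\mathrm{ur}}(p^{1/q})\subseteq\mathbb{Q}_p^{\mathrm{ur}}(p^{1/e'})\subseteq K_{\mathfrak{p}}^{\mathrm{ur}}$) is precisely the content that Ishida's theorem packages into the assertion $k_1\subseteq k$. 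The paper's approach buys the more refined statement about higher torsion ($q^{l-1}\mathcal{G}$) essentially for free from the same source theorem, while yours is self-contained, avoids quoting the full structure of $k$, and transparently handles the case $p\mid e_K(p)$ (wild in $K$, still tame in $F_p$), which Ishida's $k_1$-factor, restricted as stated to $p\nmid e_K(p)$, doesn't directly cover. You also note the archimedean convention issue, which is present in both treatments.

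One small wording matter: pairwise linear disjointness of the $F_p$ does not by itself give $\operatorname{Gal}(k_0/\mathbb{Q})\cong(\mathbb{Z}/q\mathbb{Z})^s$ (think of $\mathbb{Q}(\sqrt2),\mathbb{Q}(\sqrt3),\mathbb{Q}(\sqrt6)$). What you actually need — and what your parenthetical already supplies — is that each $F_p$ is ramified at a prime at which the compositum of the other $F_{p'}$ is unramified, hence $F_p$ is not contained in that compositum. Phrasing it that way makes the step airtight.
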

Note that the genus group of $K$, by definition, is the quotient of the ideal class group.
Thereby we have an estimate for $\operatorname{rk}_q\operatorname{Cl}_K$.
On the other hand, the work of P.Roquette and H.Zassenhaus~\cite{RZ1969ClassRank} says that we can construct a subgroup of $\operatorname{Cl}_K$, which could be used for the estimate of the $q$-rank.
And the main theorem of their paper says the following.
\begin{theorem}\cite[Theorem 1]{RZ1969ClassRank}\label{thm:S1 invariant part}
Let \(K\) be a number field of degree \(n\) over \(\mathbb{Q}\) and \(q\) be a given prime number, then
\[\operatorname{rk}_q\operatorname{Cl}_K\geq\#\{p\mid e_K(p)\equiv0\bmod{q}\}-2(n-1)\]
\end{theorem}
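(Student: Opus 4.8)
\emph{Sketch of the intended proof.} The plan is to attach to each \ram{prime}{$q$} an explicit $q$-torsion class and to bound the corank of the subgroup they generate. If $p\mathscr{O}_K=\mathfrak{p}_1^{e_1}\cdots\mathfrak{p}_m^{e_m}$ with $q\mid\gcd(e_1,\dots,e_m)$, set $\mathfrak{a}_p:=\mathfrak{p}_1^{e_1/q}\cdots\mathfrak{p}_m^{e_m/q}$; this is an integral ideal with $\mathfrak{a}_p^{q}=p\mathscr{O}_K$, so $[\mathfrak{a}_p]\in\operatorname{Cl}_K[q]$. Writing $S$ for the set of \ram{primes}{$q$}, I would consider the $\mathbb{F}_q$-linear map $\phi\colon\mathbb{F}_q^{S}\to\operatorname{Cl}_K[q]$, $e_p\mapsto[\mathfrak{a}_p]$ (whose image is the Roquette--Zassenhaus subgroup). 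Since $\operatorname{rk}_q\operatorname{Cl}_K\geq\dim_{\mathbb{F}_q}\operatorname{im}\phi=|S|-\dim_{\mathbb{F}_q}\ker\phi$, the whole problem reduces to proving $\dim_{\mathbb{F}_q}\ker\phi\leq 2(n-1)$. (If $q\nmid n$ then $S=\emptyset$ and there is nothing to prove, so assume $q\mid n$, in particular $n\geq 2$.)

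First I would identify $\ker\phi$ with a controlled group. A relation $(c_p)_{p\in S}\in\ker\phi$ (lift each $c_p$ to $\{0,\dots,q-1\}$) says $\prod_{p}\mathfrak{a}_p^{c_p}=(\alpha)$ for some $\alpha\in K^{*}$; raising to the $q$-th power gives $\alpha^{q}\mathscr{O}_K=\bigl(\prod_p p^{c_p}\bigr)\mathscr{O}_K$, hence $\alpha^{q}=\varepsilon N$ with $N=\prod_p p^{c_p}\in\mathbb{Q}^{*}$ and $\varepsilon\in\mathscr{O}_K^{*}$; thus $\alpha\in V:=\{\gamma\in K^{*}\colon \gamma^{q}\in\mathbb{Q}^{*}\mathscr{O}_K^{*}\}$. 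One checks that $(c_p)\mapsto\alpha\bmod\mathbb{Q}^{*}\mathscr{O}_K^{*}$ is a well-defined homomorphism $\ker\phi\to W:=V/\mathbb{Q}^{*}\mathscr{O}_K^{*}$ (a different lift changes $\alpha$ by a product of primes of $S$, which lies in $\mathbb{Q}^{*}$, while $\alpha$ is anyway determined only up to $\mathscr{O}_K^{*}$), and that it is injective: if $\alpha=a\eta$ with $a\in\mathbb{Q}^{*}$ and $\eta\in\mathscr{O}_K^{*}$, then comparing valuations at the primes above each $p$ in $a\mathscr{O}_K=\prod_p\mathfrak{a}_p^{c_p}$ forces $q\mid c_p$ for all $p$. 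Hence $\dim_{\mathbb{F}_q}\ker\phi\leq\dim_{\mathbb{F}_q}W$, and $W$ is precisely the $q$-torsion of $K^{*}/\mathbb{Q}^{*}\mathscr{O}_K^{*}$, so only a purely arithmetic bound remains.

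To bound $\dim_{\mathbb{F}_q}W$ I would first use $\mathscr{O}_K^{*}\cap(K^{*})^{q}=(\mathscr{O}_K^{*})^{q}$ (the ideal group is torsion-free) and $\mu_q(K)\subseteq\mathscr{O}_K^{*}$ to see that $\gamma\mapsto\gamma^{q}$ embeds $W$ into $Q:=\bigl(\mathbb{Q}^{*}\mathscr{O}_K^{*}\cap(K^{*})^{q}\bigr)/(\mathbb{Q}^{*}\mathscr{O}_K^{*})^{q}$. On $Q$ there is the homomorphism extracting the unit factor, $\operatorname{proj}\colon Q\to\mathscr{O}_K^{*}/\langle-1\rangle(\mathscr{O}_K^{*})^{q}$ (well defined since $a\varepsilon=a'\varepsilon'$ forces $\varepsilon/\varepsilon'=\pm1$), whose kernel is a quotient of $G:=\bigl(\mathbb{Q}^{*}\cap(K^{*})^{q}\bigr)/(\mathbb{Q}^{*})^{q}$. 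Now $\dim_{\mathbb{F}_q}\mathscr{O}_K^{*}/\langle-1\rangle(\mathscr{O}_K^{*})^{q}=(r_1+r_2-1)+\dim_{\mathbb{F}_q}\bigl(\mu_K/\langle-1\rangle\mu_K^{q}\bigr)\leq n-1$ by Dirichlet's unit theorem (the extra term is nonzero only when $q$ is odd and $\zeta_q\in K$, in which case $r_1=0$ and the total is $n/2\leq n-1$); and every class in $G$ is represented by a rational $a$ with $\sqrt[q]{a}\in K$, so $M:=\mathbb{Q}\bigl(\sqrt[q]{a}\colon\bar a\in G\bigr)$ is a subfield of $K$ with $[M:\mathbb{Q}]=q^{\dim_{\mathbb{F}_q}G}$ (no collapse occurs over $\mathbb{Q}(\zeta_q)$ because a rational that is a $q$-th power in $\mathbb{Q}(\zeta_q)$ is already one in $\mathbb{Q}$, by a norm computation using $\gcd(q,q-1)=1$), giving $\dim_{\mathbb{F}_q}G\leq\log_q n\leq n-1$. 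Altogether
\[
\operatorname{rk}_q\operatorname{Cl}_K\ \geq\ |S|-\dim_{\mathbb{F}_q}W\ \geq\ |S|-\dim_{\mathbb{F}_q}Q\ \geq\ |S|-\dim_{\mathbb{F}_q}G-(n-1)\ \geq\ |S|-2(n-1).
\]

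The constructions and the various well-definedness and injectivity checks are routine; the delicate point is the final arithmetic estimate, and within it the bookkeeping of roots of unity and signs when $q=2$. There $\mathscr{O}_K^{*}/(\mathscr{O}_K^{*})^{2}$ alone has dimension $r_1+r_2$, which equals $n$ for totally real $K$, so it is genuinely necessary to work modulo $\langle-1\rangle$ — equivalently, to exploit that the unit factor of a square is totally positive — in order to bring the unit contribution down to $n-1$ rather than $n$.
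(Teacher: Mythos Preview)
Your argument is correct and follows exactly the Roquette--Zassenhaus strategy that the paper invokes (Theorem~\ref{thm:rz69}, with the key estimate imported as Lemma~\ref{lemma:difference between principal ideals} from \cite{RZ1969ClassRank}): your relation space $W$ is canonically the $q$-torsion of the paper's $\mathscr{P}_K^\Gamma/\mathscr{P}_{\mathbb{Q}}$, and your splitting of $W$ into a unit piece (bounded by $n-1$ via Dirichlet) and a $q$-radical piece (bounded by $\log_q n\le n-1$) is the same two-term decomposition that underlies the bound $\operatorname{rk}_K+v_q(n_q)+\delta_K^{(q)}\le 2(n-1)$ there. The only cosmetic difference is that you work directly in $K$ and never pass to the Galois closure.
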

For the introduction of their construction and more details, see Section~\ref{section:non-randomness}.

Since the splitting type of a prime $p$ in a field extension $K/\mathbb{Q}$ is determined by its decomposition group $G_p$ and inertia subgroup $I_p$ and so on, we can first study the group structure of the Galois group.
\begin{definition}\label{def: e(g)}
    Let $1\leq G\leq S_n$ be a finite transitive permutation group.
    Let $g\in G$ be any permutation.
    Define $e(g):=\gcd(\lvert\langle g\rangle\cdot1\rvert,\dots,\lvert\langle g\rangle\cdot n\rvert)$, i.e., the greatest common divisor of the size of orbits.
\end{definition}
\begin{example}
For now let $G=S_3$. 
If $K$ is a non-Galois cubic number field, then the permutation action of $G$ on $K\to\mathbb{C}$ is exactly the conventional action of $S_3$ on $\{1,2,3\}$, which induces the isomorphism $S_3\cong G(\hat{K}/\mathbb{Q})$, where $\hat{K}/\mathbb{Q}$ is the Galois closure of $K/\mathbb{Q}$.
By checking the elements of $S_3$, we see that $3$ is the only rational prime such that the set
\begin{equation*}
	\{g\in G\mid e(g)\equiv0\bmod{3}\}
\end{equation*}
is nontrivial.
On the other hand, a rational prime $p>3$ has inertia generator conjugate to $(123)$ if and only if it is totally ramified in $K/\mathbb{Q}$.
In other words, when $p>3$, we have that $e_K(p)\equiv0\bmod{3}$ if and only if $p$ is totally ramified.
So, Theorem~\ref{thm:S1 invariant part} says that
\begin{equation*}
	\operatorname{rk}_3\operatorname{Cl}_K\geq\#\{p\mid p\text{ is totally ramified}\}-4.
\end{equation*}
\end{example}
See Lemma~\ref{lemma: non-random prime} for more information.
This example indicates that the distribution of $\operatorname{rk}_3\operatorname{Cl}_K$ is closely related to the number of totally ramified primes for cubic fields $K/\mathbb{Q}$.
Inspired by this example, we make the following hypothesis on counting fields based on the Malle-Bhargava Heuristics.
See Malle~\cite{malle2004distribution}, Bhargava~\cite{bhargava07mass}, and Wood~\cite[p. 291-339]{directions2016} for the details of the heuristics.

From now on, we let $G$ be a transitive permutation group.
\begin{definition}\label{def: specifactions}
	\begin{enumerate}
		\item We say that a subset $\Omega$ of $G$ is closed under invertible powering if for each $g,h\in G$ there exists $a,b\in\mathbb{Z}$ such that $g^a=h$, $h^b=g$, then $g\in\Omega$ if and only if $h\in\Omega$.
		\item Let $\Omega\subseteq G$ be a nonempty subset that is closed under invertible powering and conjugation.
		Let $\mathcal{S}$ be a subset of $\mathcal{S}(G)$.
		Define for the set $\Omega$, and for all $r=0,1,2,\dots$, the subset $\mathcal{S}_{\Omega}^r$ of $\mathcal{S}$ as follows:
		A field $K\in\mathcal{S}$ is contained in $\mathcal{S}_{\Omega}^r$ if there exists exactly $r$ tamely ramified primes $p$ such that its inertia generator is conjugate to some $g\in\Omega$.
	\end{enumerate}	
\end{definition}
The hypothesis itself is a statement on field-counting.
\begin{hypothesis}\label{hypothesis main}
	Let $1\notin\Omega$ be a nonempty subset of $G$ that is closed under invertible powering.
	Let $\mathcal{S}:=\mathcal{S}(G)$ ordered by some counting function $C$.	
	If for each non-negative integer $r$, we have
	\begin{equation*}
		N_{\mathcal{S}_{\Omega}^r,C}(X)=o\left(N_{\mathcal{S},C}(X)\right)
	\end{equation*}
	then we say that the Hypothesis holds for the pair $({\mathcal{S}},C,\Omega)$.
\end{hypothesis}
Then let's present our result on the distribution of class groups based on this hypothesis.
In other words, either we assume the hypothesis or we should prove it.
Recall that for a number field $K$, if $L/K$ is an algebraic extension, then the norm map $\operatorname{Nm}_{L/K}:L\to K$ induces a map $\operatorname{Nm}_{L/K}:\operatorname{Cl}_L\to\operatorname{Cl}_K$.
Let $\operatorname{Cl}(L/K):=\ker(\operatorname{Nm}_{L/K}(\operatorname{Cl}_L\to\operatorname{Cl}_K))$ be the \emph{relative class group}.
For example, $\operatorname{Cl}(K/\mathbb{Q})=\operatorname{Cl}_K$ is just the usual class group.
\begin{theorem}\label{thm:statistical results for relative class groups}
	Let $k$ be a number field, let $\mathcal{S}:=\mathcal{S}(G,k)$ ordered by some counting function $C$.
	Let $H\subseteq G$ be a subgroup such that $\hat{K}^H\subseteq K$ for $K\in\mathcal{S}$, where $\hat{K}$ is the Galois closure of $K$.
	Let $q$ be a rational prime such that $q\mid[K:\hat{K}^H]$, and let
	\begin{equation*}
		\Omega_q:=\{g\in G\mid e(g)\equiv0\bmod{q}\}.
	\end{equation*}
	If Hypothesis~\ref{hypothesis main} holds for $(\mathcal{S},C,\Omega)$, where $\Omega$ is a nonempty subset of $\Omega_q$ that is closed under invertible powering and conjugation, then
	\begin{equation*}
		\mathbb{P}_{\mathcal{S},C}(\operatorname{rk}_q\operatorname{Cl}(K/\hat{K}^H)\leq r)=0\quad\text{and}\quad\mathbb{E}_{\mathcal{S},C}(\lvert\operatorname{Hom}(\operatorname{Cl}(K/\hat{K}^H),C_q)\rvert)=+\infty.
	\end{equation*}
\end{theorem}
For the proof of the above Theorem~\ref{thm:statistical results for relative class groups} and for more discussions on the distribution of class groups in general, see Section~\ref{section:non-random primes}.

With the help of Class Field Theory and Tauberian Theorems, we can prove the Hypothesis~\ref{hypothesis main} for abelian extensions.
To be precise, we have the following.
\begin{theorem}\label{thm:abelian case relative class groups}
	Assume that $G$ is abelian and let $H$ be a subgroup,
	and let ${\mathcal{S}}:=\mathcal{S}(G)$ ordered by the product of ramified primes $P$.
	If $q$ is a prime number and $l$ is a positive integer such that $q^l\mid\lvert G/H\rvert$, then the Hypothesis~\ref{hypothesis main} holds for $(\mathcal{S},P,\Omega_{q^l})$, where $\Omega_{q^l}:=\{g\in G\mid e(g)\equiv0\bmod{q^l}\}$.
	In addition, we have
	\begin{equation*}
		\mathbb{P}_{\mathcal{S},P}(\operatorname{rk}_q q^{l-1}\operatorname{Cl}(K/K^H)\leq r)=0\quad\text{and}\quad\mathbb{E}_{\mathcal{S},P}(\lvert\operatorname{Hom}(q^{l-1}\operatorname{Cl}(K/K^H),C_q)\rvert)=+\infty.
	\end{equation*}
\end{theorem}
See Section~\ref{section: abelian extensions} for the proof.
This result simply means that when the prime $q\mid\lvert G\rvert$, we have infinite $C_q$-moment for abelian $G$-fields.
Of course, this is different from what is predicted for $\operatorname{Cl}_K[p^\infty]$ when $p\nmid\lvert G\rvert$ by Cohen-Lenstra Heuristics.

For non-abelian extensions, the first obstacle is counting fields.
We present here an example, $D_4$-fields.
Let $D_4$ be the dihedral group of order $8$, and we are going to consider quartic number fields $L/\mathbb{Q}$ whose Galois closure $M/\mathbb{Q}$ are $D_4$-fields.
According to the work of S.A.Altug, A.Shankar, I.Varma, K.H.Wilson~\cite{altug2017number}, the result of counting such fields by \emph{the Artin conductor of the 2-dimensional irreducible representation of }$D_4$ is proven.
The main result in this case can be summarized as follows.
Note that $D_4$ viewed as a subgroup of $S_4$ is a transitive permutation group.
\begin{theorem}\label{thm: S1 D4 conductor}
	Let $\mathcal{S}:=\mathcal{S}(D_4)$.
	We have
	\begin{equation*}
		\mathbb{E}_{\mathcal{S},C}(\lvert\operatorname{Hom}(\operatorname{Cl}_L,C_2)\rvert)=+\infty,
	\end{equation*}
	where the subscript $C$ means that the fields $L\in\mathcal{S}$ are ordered by the Artin conductor of 2-dimensional irreducible representation of $D_4$.
\end{theorem}
The method we adopt follows from Altug and Shankar and Varma and Wilson~\cite{altug2017number}.
See Section~\ref{section:D4 extensions} for the proof.

To summarize this section, the notion ``good prime'' in Cohen and Martinet~\cite[D{\'e}finition 6.1]{CM90} (see also W. and Wood~\cite[Section 7]{wang2021moments}) gives a criterion for us to apply the Cohen-Martinet-Lenstra Heuristics so that we can predict the statistical behaviour of the class groups based on random modules.
However, if a rational prime $q$ satisfies the condition that
\begin{equation*}
	\{g\in G\mid e(g)\equiv0\bmod{q}\}\neq\emptyset,
\end{equation*}
then we expect a nontrivial subgroup of $\operatorname{Cl}_K$ associated to ramified primes and it admits qualitatively different statistical results from ``good prime'' cases.
For abelian fields and $D_4$-fields, this could be proved by field-counting.
For general dihedral extensions, we can prove it with some additional hypothesis.
See also Section~\ref{section:Dihedral extensions}.
In general, this is still remained unknown.
Finally, because of Gerth's conjecture~\cite{Gerth1987} and the work of Alex Smith~\cite{smith2022distribution}, we see that the study of the primes that are not good is also closely related to the Cohen-Lenstra-Martinet Heuristics itself.

\section{Basic notations}\label{section:notation}
In this section we introduce some of the notations that will be used in the paper.
We use some standard notations coming from analytic number theory.
For example, write a complex number as $s=\sigma+it$.
The notation $q^l\|n$ means that $q^l\mid n$ but $q^{l+1}\nmid n$.
Denote the Euler function by $\phi(n)$.
Let $\omega(n)$ counts the number of distinct prime factors of $n$ and so on.

We also follow the notations of inequalities with unspecified constants from Iwaniec and Kowalski~\cite[Introduction, p.7]{iwaniec2004analytic}.
Let's just write down the ones that are important for us.
Let $X$ be some set, and let $f,g$ be two (complex) functions.
Then $f(x)\ll g(x)$ for $x\in X$ means that $\lvert f(x)\rvert\leq Cg(x)$ for some constant $C\geq0$.
Any value of $C$ for which this holds is called an implied constant.
Note that we use Vinogradov's $f\ll g$ and Landau's $f=O(g)$ as synonyms.
We use $f(x)\asymp g(x)$ for $x\in X$ if $f(x)\ll g(x)$ and $g(x)\ll f(x)$ both hold with possibly different implied constants.
If $X$ admits a distant function, then we say that $f=o(g)$ as $x\to x_0$ if for any $\epsilon>0$ there exists some (unspecified) neighbourhood $U_\epsilon$ of $x_0$ such that $\lvert f(x)\rvert\leq\epsilon g(x)$ for $x\in U_\epsilon$.

\section{Non-randomness}\label{section:non-randomness}
The term non-randomness here refers to the case where we can obtain some nontrivial ideal classes in $\operatorname{Cl}_K$ associated to the ramified primes.
Let's first see what genus theory say in this case.

\subsection{Genus theory}
The goal of this section is to prove Theorem~\ref{thm:S1 genus theory}, which requires a brief introduction on genus theory for number fields.
The basic question of genus theory is to find out the maximal unramified abelian extension of a number field \(K\) obtained by composing with an absolute abelian number field \(k\).
To be precise, we have the following definition.
\begin{definition}
let \(K/\mathbb{Q}\) be a number field of degree \(n\),
and let $k/\mathbb{Q}$ be the maximal abelian extension such that $Kk/K$ is an unramified extension.
We call the compositum $Kk$ the \emph{genus field} over \(K\),
and call the Galois group \(G(Kk/K)\) the \emph{genus group} \(\mathcal{G}\).
\end{definition}
Since \(Kk/K\) is an unramified abelian extension, it is a subfield of the Hilbert class field of \(K\) whose Galois group is isomorphic to the class group \(\operatorname{Cl}_K\) of \(K\), 
hence the genus group is a quotient group of the class group \(\operatorname{Cl}_K\).
According to Ishida~\cite[p.33-39]{ishida1976genus}, we make a summary of the results on the genus group.
Fix a rational prime \(q\) such that \(q^t\| n\) with \(t\geq1\).
Let $A\cong G(k/\mathbb{Q})$ be the Galois group of the abelian extension $k/\mathbb{Q}$,
and let $K_0$ be the intersection of $k$ and $K$ which is also the maximal abelian subextension of $K/\mathbb{Q}$.
See the diagram for summary below.
\[\begin{tikzcd}
&Kk\arrow[dr,dash]\arrow[dl,dash,"\mathcal{G}"]&\\
K\arrow[dr,dash]&&k\arrow[ddl,dash,"A"]\\
&K_0\arrow[d,dash]\arrow[ru,dash,"\mathcal{G}"]&\\
&\mathbb{Q}&
\end{tikzcd}\]
Let's state the result as follows.
\begin{theorem}\label{thm:q-rank estimation of genus group}
Let \(K/\mathbb{Q}\) be a number field with maximal abelian subextension $K_0/\mathbb{Q}$.
Let $q$ be a fixed rational prime dividing $n:=[K:\mathbb{Q}]$ and let $l\geq1$ be some integer, the \(q\)-rank of the group \(q^{l-1}\mathcal{G}\) admits the following inequality
\begin{equation*}
	\operatorname{rk}_q q^{l-1}\mathcal{G}\geq\#\{p\mid e_K(p)\equiv0\bmod{q^l}\text{ and }p\equiv1\bmod{q}\}-\operatorname{rk}_q G(K_0/\mathbb{Q}).
\end{equation*}
\end{theorem}
\begin{remark}
Theorem~\ref{thm:S1 genus theory} just follows from this result.
Moreover we can see that if $\mathcal{G}[q^\infty]$ admits higher torsion part, then $\operatorname{Cl}_K[q^\infty]$ must also have higher torsion part.
\end{remark}
\begin{proof}
	This result follows from Ishida~\cite[Chapter IV, Theorem 3]{ishida1976genus}.
\end{proof}
\begin{example}
Let $K/\mathbb{Q}$ be a non-Galois cubic field, and $q$ equal $3$.
Then the requirement $\gcd(p-1,e_K(p))\equiv0\bmod{3}$ is equivalent to $p$ being totally ramified in $K/\mathbb{Q}$ and $p\equiv1\bmod{3}$.
In other words, we have
\begin{equation*}
\operatorname{rk}_3\operatorname{Cl}_K\geq\#\{p\text{ is a totally ramified prime and }p\equiv1\bmod{3}\}.
\end{equation*}
This clearly generalizes genus theory for the quadratic case.
See also \cite[Chapter 5]{ishida1976genus} for more discussions on the case of odd prime degree.
\end{example}

\subsection{The class rank estimate on the invariant part of the class group}
In the paper of Roquette and Zassenhaus~\cite{RZ1969ClassRank}, there is another result on the estimate of the \(q\)-rank of the class group with respect to ramified primes.
As seen in previous section, what genus theory is focused on is the genus group, which is the quotient of the class group, while the invariant part of the class group we will talk about is a subgroup.
	
We will explain briefly the work of Roquette and Zassenhaus by showing the proof of Theorem~\ref{thm:rz69}.
More importantly, we want to show the construction, which is more useful in this paper.
Let's first introduce some notations.
If $K$ is a number field, let $I_K$ be the group of fractional ideals and $P_K$ be the group of principal ideals, and let $\operatorname{rk}_K$ be its rank of global units.
Let \(v_q(n)\) denote the normalized exponential valuation associated to the rational prime $q$, i.e., we compute the exponent of $q$ showing up in the factorization of $n$.
\begin{definition}\label{def:invariant part of class groups}
Let $K/\mathbb{Q}$ be a number field whose Galois closure is a $G$-extension $L/\mathbb{Q}$.
By viewing the group of fractional ideals $I_K$ of $K$ as a subgroup of $I_L$, 
define
\begin{equation*}
	D_K^G:=\operatorname{im}(I^G_L\cap I_K\to\operatorname{Cl}_K).
\end{equation*}
Let's call it the \emph{invariant part of the class group}.
\end{definition}
Let's give the statement on the structure of the subgroup $D^G_K[q^\infty]\subseteq\operatorname{Cl}_K[q^\infty]$, which is due to Roquette and Zassenhaus.
\begin{theorem}\label{thm:rz69}
Let $K/\mathbb{Q}$ be a (not necessarily Galois) extension of degree $n$ such that its Galois closure is a $G$-extension.
Let \(q\) be a fixed prime number, and let $l\geq1$ be an integer, then
\[\begin{aligned}
&\#\{p\mid e_K(p)\equiv0\bmod{q^l}\}-2(n-1)\\
&\leq\operatorname{rk}_qq^{l-1}D_K^G\leq\\
&\#\{p\mid e_K(p)\equiv0\bmod{q^l}\}.
\end{aligned}\]
\end{theorem}
We prove the theorem by two lemmas.
The first lemma gives a detailed description of the elements in $D_K^G$ in the sense of fractional ideals.
For each rational prime $p$, if $p\mathscr{O}_K=\mathfrak{p}_1^{e_1}\cdots\mathfrak{p}_r^{e_r}$ is the factorization in $K$, then define
\begin{equation*}
	\mathfrak{a}(p):=\prod_{i=1}^r\mathfrak{p}_i^{e_i/e_K(p)}.
\end{equation*}
\begin{lemma}\label{lemma:structure of invariant part}\cite[Equation (8)]{RZ1969ClassRank}
Let $K/\mathbb{Q}$ be a number field whose Galois closure is a $G$-extension $L/\mathbb{Q}$.
\begin{enumerate}
	\item The group $I_K\cap I_L^{G}$ is free abelian generated by 
	\begin{equation*}
		\{\mathfrak{a}(p)\mid p\text{ is a rational prime}\}.
	\end{equation*}
	\item The group $\tilde{D}_K^G:=I_K\cap I_L^{G}/P_{\mathbb{Q}}$ is isomorphic to $\prod_p\mathbb{Z}/e_K(p)$, where $P_k$ means the group of principal ideals of a number field $k$.
\end{enumerate}
\end{lemma}
The second lemma is to estimate the difference between principal ideals of $K$ and $\mathbb{Q}$.
\begin{lemma}\label{lemma:difference between principal ideals}\cite[Equation(11)]{RZ1969ClassRank}
Let $K/\mathbb{Q}$ be a number field whose Galois closure a $G$-extension $L/\mathbb{Q}$, and let $\mathscr{P}_k$ be the group of principal ideals of a number field $k$.
Then
\begin{equation}\label{eqn:invariant}
    \operatorname{rk}_q{P}_K^G/P_{\mathbb{Q}}\leq2(n-1),
\end{equation}
where $P_K^G:=P_K\cap I_L^G$.
\end{lemma}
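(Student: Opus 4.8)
The plan is to analyze $V:=\mathscr{P}_K^\Gamma/\mathscr{P}_{\mathbb{Q}}$ directly. First I would record two structural facts. If $\alpha\in K^*$ is such that $(\alpha)\mathscr{O}_L$ is $\Gamma$-invariant, then $\sigma(\alpha)/\alpha\in\mathscr{O}_L^*$ for every $\sigma\in\Gamma$, whence $\alpha^n/\operatorname{Nm}_{K/\mathbb{Q}}(\alpha)=\prod_{\sigma}\big(\alpha/\sigma(\alpha)\big)$ lies in $\mathscr{O}_L^*\cap K^*=\mathscr{O}_K^*$; since $\operatorname{Nm}_{K/\mathbb{Q}}(\alpha)\in\mathbb{Q}^*$ this gives $(\alpha)^n\in\mathscr{P}_{\mathbb{Q}}$, so $V$ is killed by $n$. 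Moreover, by Lemma~\ref{lemma:structure of invariant part} (using $e_K(p)=1$ for $p$ unramified) $V$ embeds into $\prod_{p\text{ ram}}\mathbb{Z}/e_K(p)$, hence is finite, so $\operatorname{rk}_q V=\dim_{\mathbb{F}_q}V[q]$ and it suffices to bound the latter.

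Second, I would give a Kummer-theoretic description of $V[q]$. Put $B:=\mathscr{O}_K^*\cdot\mathbb{Q}^*\subseteq K^*$. If $\gamma\in K^*$ has $\gamma^q\in B$, write $\gamma^q=wc$ with $w\in\mathscr{O}_K^*$, $c\in\mathbb{Q}^*$; then $\big((\gamma)\mathscr{O}_L\big)^q=(c)\mathscr{O}_L$ is $\Gamma$-invariant, and since the fractional ideal group $\mathscr{I}_L$ is torsion-free, $(\gamma)\mathscr{O}_L$ is itself $\Gamma$-invariant. Thus $V[q]$ is exactly the image in $V$ of $\{\gamma\in K^*:\gamma^q\in B\}$, and the map $\gamma\mapsto\gamma^q$ induces an isomorphism
\[
V[q]\ \cong\ \ker\!\big(B/B^q\to K^*/(K^*)^q\big)
\]
(the kernel of $\gamma\mapsto\gamma^q$ being $\mu_q(K)\cdot B=B$, as $\mu_q(K)\subseteq\mathscr{O}_K^*$). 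Next, a unit that is a $q$-th power in $K^*$ is already the $q$-th power of a unit, so the $\mathscr{O}_K^*/(\mathscr{O}_K^*)^q$ part contributes nothing to this kernel; reducing the exact sequence $1\to\mathscr{O}_K^*\to B\to\mathbb{Q}^*/\{\pm1\}\to1$ mod $q$ (note $\mathbb{Q}^*/\{\pm1\}$ is torsion-free), one finds for $q$ odd that $V[q]$ injects into $\mathbb{Q}^*/(\mathbb{Q}^*)^q$ with image
\[
\Theta\ :=\ \big\{\,[c]\ :\ (c)\mathscr{O}_K=(\gamma)^q\text{ for some }\gamma\in K^*\,\big\}.
\]

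Third, I would bound $\dim_{\mathbb{F}_q}\Theta$ by splitting it. There is a homomorphism $\Theta\to\mathscr{O}_K^*/(\mathscr{O}_K^*)^q$, $[c]\mapsto[\gamma^q/c]$ where $(c)\mathscr{O}_K=(\gamma)^q$, well defined by the indeterminacies of $c\bmod(\mathbb{Q}^*)^q$ and $\gamma\bmod\mathscr{O}_K^*$. Its kernel is $\{[c]:c\in(K^*)^q\}$; but a $q$-th root of a rational $q$-th power is a $q$-radical over $\mathbb{Q}$, hence lies in $K_q$, so this kernel equals $\ker\!\big(\mathbb{Q}^*/(\mathbb{Q}^*)^q\to K_q^*/(K_q^*)^q\big)$, which a short radical-extension computation bounds by $v_q(n_q)$ (if $c_1,\dots,c_m$ are independent mod $q$-th powers and their $q$-th roots lie in $K_q$, then $q^m\mid[K_q:\mathbb{Q}]$). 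On the image side, Dirichlet gives $\dim_{\mathbb{F}_q}\mathscr{O}_K^*/(\mathscr{O}_K^*)^q=\operatorname{rk}_K+\dim_{\mathbb{F}_q}\mu(K)/\mu(K)^q$, and the class $[\zeta_q]$ spanning the torsion direction lies in the image of $\Theta$ precisely when there exists $\eta\in K^*$ with $\zeta^{-1}\eta^q\in\mathbb{Q}$, i.e.\ precisely when $\delta_K^{(q)}=1$ (the conditions $q>2$ and $\zeta_q\in K$ being forced for $[\zeta_q]$ to be nonzero). Hence the image has $\mathbb{F}_q$-dimension at most $\operatorname{rk}_K+\delta_K^{(q)}$, and combining, $\operatorname{rk}_q V=\dim_{\mathbb{F}_q}\Theta\le v_q(n_q)+\operatorname{rk}_K+\delta_K^{(q)}$. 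The case $q=2$ runs identically after replacing $\mathbb{Q}^*/(\mathbb{Q}^*)^2$ by $\mathbb{Q}^*_{>0}/(\mathbb{Q}^*_{>0})^2$; then $\delta_K^{(2)}=0$, and the potential $[-1]$-contribution to the image of $\Theta\to\mathscr{O}_K^*/(\mathscr{O}_K^*)^2$ is independent of the kernel term inside $\ker\!\big(\mathbb{Q}^*/(\mathbb{Q}^*)^2\to K_2^*/(K_2^*)^2\big)$, so the two together are still bounded by $v_2(n_2)$.

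The genuine obstacle is this last step: proving $\dim_{\mathbb{F}_q}\ker\!\big(\mathbb{Q}^*/(\mathbb{Q}^*)^q\to K_q^*/(K_q^*)^q\big)\le v_q(n_q)$ and that the root-of-unity direction of $\mathscr{O}_K^*/(\mathscr{O}_K^*)^q$ is governed exactly by $\delta_K^{(q)}$ — that is, aligning the above bookkeeping with the Roquette--Zassenhaus analysis of the $q$-radical subfield $K_q$ (including the sign issues at $q=2$). The preceding reductions are formal.
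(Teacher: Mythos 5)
The paper does not prove this lemma; it cites equation~(11) of Roquette--Zassenhaus and moves on. So there is nothing in the text to compare your argument against, and the right question is whether your reconstruction is itself sound. The skeleton is: $V=\mathscr{P}_K^\Gamma/\mathscr{P}_{\mathbb{Q}}$ is finite, $V[q]\cong\ker\bigl(B/B^q\to K^*/(K^*)^q\bigr)$ with $B=\mathscr{O}_K^*\cdot\mathbb{Q}^*$, this kernel injects into $\mathbb{Q}^*/\{\pm1\}(\mathbb{Q}^*)^q$ because units that are $q$-th powers in $K^*$ are $q$-th powers of units, and then the map $[c]\mapsto[\gamma^q/c]$ to $\mathscr{O}_K^*/(\mathscr{O}_K^*)^q$ splits the count into a $v_q(n_q)$ piece and an $\operatorname{rk}_K+\delta_K^{(q)}$ piece. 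I checked each of these reductions and they are correct, including the torsion-freeness argument that promotes $\Gamma$-invariance of $I^q$ to $\Gamma$-invariance of $I$ in $\mathscr{I}_L$, and the identification of the kernel of $[c]\mapsto[\gamma^q/c]$ with rational classes whose $q$-th roots land in $K_q$. This is in spirit exactly the Roquette--Zassenhaus analysis, so you have reconstructed the right proof.

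Two places are more delicate than the writing suggests. First, the phrase ``$[\zeta_q]$ spanning the torsion direction'' only holds when $q\| |\mu(K)|$; if $q^2\mid|\mu(K)|$ then $[\zeta_q]=0$ in $\mu(K)/\mu(K)^q$ and the generator is a higher root of unity. The bound survives because in that case $\delta_K^{(q)}=1$ automatically ($\eta=\zeta_{q^2}$ satisfies $\zeta_q^{-1}\eta^q=1\in\mathbb{Q}$), but the claimed ``precisely when'' is not literally correct and the dichotomy should be stated as $W\cap\mu/\mu^q\neq 0\Rightarrow\delta_K^{(q)}=1$. Second, for $q=2$ you argue via the ``potential $[-1]$-contribution,'' which describes the case $\mu(K)=\{\pm1\}$ (where indeed $\eta^2=-c$ gives $\sqrt{-c}\in K_2$, a negative class independent of the positive $[c_i]$). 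When $i\in K$, however, $[-1]=0$ in $\mathscr{O}_K^*/(\mathscr{O}_K^*)^2$ and the element of $W\cap\mu/\mu^2$ involves $\zeta_{2^a}$ with $a\geq 2$; the conclusion still holds, because $i\in K_2$ already supplies the class $[-1]\in\ker\bigl(\mathbb{Q}^*/(\mathbb{Q}^*)^2\to K_2^*/(K_2^*)^2\bigr)$, independent of the positive classes, so that kernel has dimension at least $\dim\ker+\dim(W\cap\mu/\mu^2)$, which is then $\leq v_2(n_2)$ by your radical-degree count. That case deserves to be spelled out since it does not literally fall under the ``$[-1]$-contribution'' phrasing. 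You flagged the last step as ``the genuine obstacle,'' and those are indeed the two spots that need the extra care; the rest is, as you say, formal.
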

Now let's prove the theorem.
\begin{proof}[Proof of Theorem~\ref{thm:rz69}]
Lemma~\ref{lemma:structure of invariant part} gives the upper bound of $\operatorname{rk}_qq^{l-1}D_K^G$, because $P_{\mathbb{Q}}\subseteq P_K$.
According to the short exact sequence
\begin{equation}
	0\to P_K^G/P_{\mathbb{Q}}\to I_K^G/P_{\mathbb{Q}}\to D_K^G\to0,
\end{equation}
the inequality (\ref{eqn:invariant}) tells us the lower bound directly.
\end{proof}
We can apply the theorem to the class group $\operatorname{Cl}_K$, for $D_K^G\subseteq\operatorname{Cl}_K.$
\begin{corollary}
Let \(K\) be a number field of degree \(n\) over \(\mathbb{Q}\), and let \(q\) be a prime number, and let $l\geq1$ be an integer, then
\[\operatorname{rk}_qq^{l-1}\operatorname{Cl}_K\geq\#\{p\mid e_K(p)\equiv0\bmod{q^l}\}-2(n-1).\]
\end{corollary}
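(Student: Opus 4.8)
The Corollary follows at once from Theorem~\ref{thm:rz69} once we record that $C_K^\Gamma$ sits inside $\operatorname{Cl}_K$ as a subgroup. Indeed, by Definition~\ref{def:invariant part of class groups} the group $C_K^\Gamma$ is the image of $\mathscr{I}_L^\Gamma\cap\mathscr{I}_K$ under the natural surjection $\mathscr{I}_K\twoheadrightarrow\operatorname{Cl}_K$, hence is a subgroup of the finite abelian group $\operatorname{Cl}_K$. So all that is really needed is to transfer the lower bound from $C_K^\Gamma$ to the ambient class group.

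The plan is then the following. First I would note that for any finite abelian group $A$, any subgroup $H\subseteq A$, and any integer $m\geq0$, one has $mH\subseteq mA$, and the $q$-rank is monotone under inclusion of subgroups: the $q$-rank of a finite abelian group $B$ equals $\dim_{\mathbb{F}_q}B[q]$, and for $H'\subseteq A'$ the subgroup $H'[q]=H'\cap A'[q]$ embeds into $A'[q]$, so $\operatorname{rk}_qH'\leq\operatorname{rk}_qA'$. Taking $A=\operatorname{Cl}_K$, $H=C_K^\Gamma$ and $m=q^{l-1}$ gives
\[\operatorname{rk}_qq^{l-1}\operatorname{Cl}_K\geq\operatorname{rk}_qq^{l-1}C_K^\Gamma.\]
Second, I would substitute the lower bound of Theorem~\ref{thm:rz69} for $\operatorname{rk}_qq^{l-1}C_K^\Gamma$ and chain the two inequalities, obtaining exactly
\[\operatorname{rk}_qq^{l-1}\operatorname{Cl}_K\geq\#\{p\text{ is a \ram{prime}{$q^l$}}\}-\left(\operatorname{rk}_{K}+v_q(n_q)+\delta_K^{(q)}\right).\]

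There is no real obstacle here: the only point requiring a moment's care is that monotonicity of $\operatorname{rk}_q$ goes the right way for \emph{subgroups} (but not, in general, for quotients), so it is the inclusion $C_K^\Gamma\subseteq\operatorname{Cl}_K$ — rather than the surjection onto the genus group used in Theorem~\ref{thm:q-rank estimation of genus group} — that supplies the needed estimate. Everything substantive is already contained in Theorem~\ref{thm:rz69}, which itself rests on Lemmas~\ref{lemma:structure of invariant part} and~\ref{lemma:difference between principal ideals}.
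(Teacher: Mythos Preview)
Your proof is correct and follows exactly the paper's approach: the paper simply remarks that one can apply Theorem~\ref{thm:rz69} to $\operatorname{Cl}_K$ because $C_K^\Gamma\subseteq\operatorname{Cl}_K$, and you have spelled out the monotonicity of $\operatorname{rk}_q$ under the inclusion $q^{l-1}C_K^\Gamma\subseteq q^{l-1}\operatorname{Cl}_K$ that makes this work.
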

One of the advantages of this theory, as mentioned above, is that $D_K^G$ is a subgroup of $\operatorname{Cl}_K$.
So, we can even try to discuss the relative class group here, i.e., for a subfield $K'\subseteq K$, we want to give a description for $D_K^G\cap\operatorname{Cl}(K/K')$.
\begin{theorem}\label{thm:rz69 rel version}
Let $K/\mathbb{Q}$ be a number field of degree $n$ whose Galois closure is a $G$-extension $L/\mathbb{Q}$.
If $K'\subseteq K$ such that $q^l\mid[K:K']$ where $q$ is a rational prime, and $l\geq1$, then
\[\operatorname{rk}_qq^{l-1}\operatorname{Cl}(K/K')\geq\#\{p\mid e_K(p)\equiv0\bmod{p^l}\}-2(n-1).\]
\end{theorem}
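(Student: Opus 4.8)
The plan is to imitate the proof of Theorem~\ref{thm:rz69}, enlarging $\mathscr{P}_K^\Gamma$ to the kernel of the norm down to $K'$. Write $L$ for the Galois closure of $K$ and keep the groups $\mathscr{I}_K\cap\mathscr{I}_L^\Gamma$, $\mathscr{P}_K^\Gamma=\mathscr{P}_K\cap\mathscr{I}_L^\Gamma$, and $\tilde{C}_K^\Gamma=(\mathscr{I}_K\cap\mathscr{I}_L^\Gamma)/\mathscr{P}_\mathbb{Q}\cong\prod_p\mathbb{Z}/e_K(p)$ from Lemma~\ref{lemma:structure of invariant part}. First I would introduce the homomorphism
\[\psi:\mathscr{I}_K\cap\mathscr{I}_L^\Gamma\longrightarrow\operatorname{Cl}_{K'},\qquad\mathfrak{a}\longmapsto[\operatorname{Nm}_{K/K'}\mathfrak{a}].\]
Since norms of principal ideals are principal and $N_{K/K'}$ carries a rational number $a$ to $a^{[K:K']}$, one has $\mathscr{P}_\mathbb{Q}\subseteq\mathscr{P}_K^\Gamma\subseteq\ker\psi$; and unwinding the definition $\operatorname{Cl}(K/K')=\ker(\operatorname{Nm}_{K/K'}:\operatorname{Cl}_K\to\operatorname{Cl}_{K'})$ together with Definition~\ref{def:invariant part of class groups} shows that the image of $\ker\psi$ in $\operatorname{Cl}_K$ is exactly $C_K^\Gamma\cap\operatorname{Cl}(K/K')$, with kernel $\mathscr{P}_K^\Gamma$; hence $C_K^\Gamma\cap\operatorname{Cl}(K/K')\cong\ker\psi/\mathscr{P}_K^\Gamma$. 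Because $C_K^\Gamma\cap\operatorname{Cl}(K/K')$ is a subgroup of $\operatorname{Cl}(K/K')$ and $\operatorname{rk}_qq^{l-1}$ is monotone under subgroups, it suffices to bound $\operatorname{rk}_qq^{l-1}(\ker\psi/\mathscr{P}_K^\Gamma)$ from below; for this I would combine the elementary inequality $\operatorname{rk}_qq^{l-1}(A/B)\geq\operatorname{rk}_qq^{l-1}A-\operatorname{rk}_qB$ with $\operatorname{rk}_q(\mathscr{P}_K^\Gamma/\mathscr{P}_\mathbb{Q})\leq\operatorname{rk}_K+v_q(n_q)+\delta_K^{(q)}\leq2(n-1)$ (Lemma~\ref{lemma:difference between principal ideals} and the remark after the corollary to Theorem~\ref{thm:rz69}), reducing the statement to the estimate $\operatorname{rk}_qq^{l-1}(\ker\psi/\mathscr{P}_\mathbb{Q})\geq\#\{p\text{ ramified of type }q^l\}$.

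To produce elements of $\ker\psi$, fix a ramified prime $p$ and recall the ideal $\mathfrak{a}(p)$ with $p\mathscr{O}_K=\mathfrak{a}(p)^{e_K(p)}$, whose class generates the free factor $\mathbb{Z}/e_K(p)$ of $\tilde{C}_K^\Gamma$ (Lemma~\ref{lemma:structure of invariant part}). Grouping the primes of $K$ above $p$ by the prime $\mathfrak{q}$ of $K'$ beneath them and using multiplicativity of $\operatorname{Nm}_{K/K'}$, one computes $\operatorname{Nm}_{K/K'}\mathfrak{a}(p)=\prod_{\mathfrak{q}\mid p}\mathfrak{q}^{e(\mathfrak{q}/p)[K:K']/e_K(p)}$; writing $d_p:=\gcd(e_K(p),[K:K'])$ this gives
\[\operatorname{Nm}_{K/K'}\bigl(\mathfrak{a}(p)^{e_K(p)/d_p}\bigr)=\Bigl(\prod_{\mathfrak{q}\mid p}\mathfrak{q}^{e(\mathfrak{q}/p)}\Bigr)^{[K:K']/d_p}=\bigl(p\mathscr{O}_{K'}\bigr)^{[K:K']/d_p}=\bigl(p^{[K:K']/d_p}\bigr),\]
which is principal. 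Hence $\mathfrak{a}(p)^{e_K(p)/d_p}\in\ker\psi$, and its class in $\tilde{C}_K^\Gamma$ has order $d_p$ and sits inside the single factor $\mathbb{Z}/e_K(p)$.

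Finally, let $p$ be a ramified prime of type $q^l$, i.e.\ $q^l\mid e_K(p)$. Since $q^l\|[K:K']$, the $q$-part of $d_p=\gcd(e_K(p),[K:K'])$ equals $q^l$, so $q^{l-1}$ times the class above has order divisible by $q$ and is nonzero in $q^{l-1}(\ker\psi/\mathscr{P}_\mathbb{Q})$. Letting $p$ range over the $s:=\#\{p\text{ ramified of type }q^l\}$ such primes, these elements lie in pairwise distinct free factors of $\prod_p\mathbb{Z}/e_K(p)$, hence generate a subgroup of $q^{l-1}(\ker\psi/\mathscr{P}_\mathbb{Q})$ of $q$-rank $s$; therefore $\operatorname{rk}_qq^{l-1}(\ker\psi/\mathscr{P}_\mathbb{Q})\geq s$, and together with the reduction of the first paragraph this yields $\operatorname{rk}_qq^{l-1}\operatorname{Cl}(K/K')\geq\operatorname{rk}_qq^{l-1}\bigl(C_K^\Gamma\cap\operatorname{Cl}(K/K')\bigr)\geq s-2(n-1)$. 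The step I expect to demand the most care is the norm computation for $\operatorname{Nm}_{K/K'}\mathfrak{a}(p)$ when neither $K/\mathbb{Q}$ nor $K'/\mathbb{Q}$ is Galois: one must check the exponents $e(\mathfrak{q}/p)[K:K']/e_K(p)$ are integers and, above all, identify the right power $\mathfrak{a}(p)^{e_K(p)/d_p}$ — one cannot work with $\mathfrak{a}(p)^{e_K(p)}=p\mathscr{O}_K$, whose class in $\tilde{C}_K^\Gamma$ is already trivial, and the crux is that $v_q(e_K(p)/d_p)<v_q(e_K(p))$ precisely because $q^l\|[K:K']$ while $q^l\mid e_K(p)$. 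The rest is the same bookkeeping as in the proof of Theorem~\ref{thm:rz69}.
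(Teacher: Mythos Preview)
Your proof is correct and follows essentially the same route as the paper: exhibit powers of $\mathfrak{a}(p)$ whose norm to $K'$ is principal, then bound the defect via Lemma~\ref{lemma:difference between principal ideals}. The paper simplifies precisely the step you flagged as delicate---the norm computation---by using $\Gamma$-invariance of $\mathfrak{a}(p)$ in $\mathscr{I}_L$ to get $\operatorname{Nm}_{K/K'}\mathfrak{a}(p)=\mathfrak{a}(p)^{[K:K']}$ in one line, and then takes the simpler power $\mathfrak{b}(p)=\mathfrak{a}(p)^{e_K(p)/q^l}$ (so $\operatorname{Nm}_{K/K'}\mathfrak{b}(p)=(p)^{[K:K']/q^l}$ is visibly principal), bypassing your $d_p$ bookkeeping entirely.
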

\begin{proof}
For each prime $p$, recall that $\mathfrak{a}(p)$ is the ideal of $K$ such that $p\mathscr{O}_K=\mathfrak{a}(p)^{e_K(p)}$.
We've shown in Lemma~\ref{lemma:structure of invariant part} that $\mathfrak{a}(p)$ is fixed by the action of $G$, viewed as an element of $I_L$.
So, we have the following computation
\[\operatorname{Nm}_{K/K'}(\mathfrak{a}(p))=\mathfrak{a}(p)^{[K:K']},\]
where $\mathfrak{a}(p)^{[K:K']}$ is treated as an ideal of $K'$.
If $q^{l}\mid e_K(p)$, and $\mathfrak{b}(p):=(\mathfrak{a}(p))^{e_K(p)/q^{l}}$, then $\operatorname{Nm}_{K/K'}(\mathfrak{b}(p))$ becomes a power of $p\mathscr{O}_{K'}$, hence a principal ideal of $K'$.
So, $\mathfrak{b}(p)$ represents an ideal class of $\operatorname{Cl}(K/K')$.
Let's define
\begin{equation*}
	I_{L/K}^G[q^l]:=\langle\mathfrak{b}(p)\mid e_{K}(p)\equiv0\bmod{q^l}\rangle.
\end{equation*}
Note that $I_{L/K}^G[q^l]\subseteq I_K^G$, and we have
\begin{equation*}
	I_{L/K}^G[q^l]P_{\mathbb{Q}}/P_{\mathbb{Q}}\cong\prod_{i=1}^s\mathbb{Z}/q^{l}
\end{equation*}
where $s=\#\{p\mid e_{K}(p)\equiv0\bmod{q^l}\}$.
By Lemma~\ref{lemma:structure of invariant part}, the group $D_K^G$ is generated by ideals of the form $\mathfrak{a}(p)$, we therefore know that
\begin{equation*}
	D_{L/K}^G[q^l]:=I_{L/K}^G[q^l]P_K/P_K\subseteq D_K^G[q^l]\cap\operatorname{Cl}(K/K').
\end{equation*}
By Lemma~\ref{lemma:difference between principal ideals}, we know that $\operatorname{rk}_q P^G_K/P_{\mathbb{Q}}$ is bounded above by a constant $2(n-1)$.
Using the short exact sequence
\begin{equation*}
	1\to
	P_K^G/P_{\mathbb{Q}}\cap I_{L/K}^G[q^l]P_{\mathbb{Q}}/P_{\mathbb{Q}}
	\to I_{L/K}^G[q^l]P_{\mathbb{Q}}/P_{\mathbb{Q}}
	\to D_{L/K}^G[q^l]
	\to1,
\end{equation*}
we obtain the desired result.
\end{proof}

\section{Non-random primes}\label{section:non-random primes}
Let $q$ be a fixed rational prime.
In Theorem~\ref{thm:S1 genus theory} and Theorem~\ref{thm:S1 invariant part}, we see that $\operatorname{rk}_q\operatorname{Cl}_K$ is related to the set
\begin{equation*}
	\{p\mid e_K(p)\equiv0\bmod{q}\}.
\end{equation*}
The ramification of a prime $p$ in the extension $K/\mathbb{Q}$ could be recovered from the decomposition group $G_p=G_p(K/\mathbb{Q})$ and the inertia subgroup $I_p$ and so on.
So the condition $e_K(p)\equiv0\bmod{q}$ could be translated as a condition on the Galois group $G$.

Let $G$ be a transitive permutation group in this section.
In a $G$-extension (see Definition~\ref{def:Gamma-fields}), let's first give the translation by the following lemma.
\begin{lemma}\label{lemma: non-random prime}
    Let $q$ be a fixed prime,
    and let $(K/\mathbb{Q},\psi)$ be an extension of number fields such that its Galois closure $(\hat{K},\psi)$ is a $G$-field and that $K=\hat{K}^{G_1}$, where $G_1$ is the image of stabilizer of $1$.
    For a fixed positive integer $l$, a rational prime $p\nmid\lvert G\rvert$ satisfies the identity
    \begin{equation*}
    	e_K(p)\equiv0\bmod{q^l},
    \end{equation*} 
    if and only if $I_p\cap\Omega_{q^l}\neq\emptyset$ where $I_p$ is the inertia subgroup of $p$ (up to conjugate), and $\Omega_{q^l}:=\{g\in G\mid e(g)\equiv0\bmod{q^l}\}$.
\end{lemma}
\begin{proof}
	Let $\mathfrak{P}$ be a prime of $\hat{K}$ above $p$.
    One can check that if $G_{\mathfrak{P}}$ is the decomposition group of $\mathfrak{P}$, then the orbits of $G_{\mathfrak{P}}$ on $G/G_1$ correspond to the primes $\mathfrak{p}$ of $K$ above $p$, and the size of orbits of $I_{\mathfrak{P}}$ corresponds to the ramification index respectively.
    
    If $p$ is a tamely ramified prime, then $I_{\mathfrak{P}}=\langle g\rangle$ is just cyclic.
    So, $e(p)\equiv0\bmod{q^l}$ if and only if the size of orbits of the inertia generator has common divisor $q^l$, i.e., $q^l\mid e(g)$ using the notations in Definition~\ref{def: e(p) and genus group} and \ref{def: e(g)}, which means that $I_{\mathfrak{P}}\cap\Omega_{q^l}\neq\emptyset$.
    Note that $I_p$ is well-defined up to conjugation.
    And the result just follows.
\end{proof}
So roughly speaking, if $q$ is a prime such that the set
\begin{equation*}
	\Omega_q=\{g\in G\mid e(g)\equiv0\bmod{q}\}
\end{equation*}
is nontrivial, then potentially results like Theorem~\ref{thm:S1 invariant part} could give us some nontrivial estimate for $\operatorname{rk}_q\operatorname{Cl}_K$, when $K$ runs over fields in $\mathcal{S}(G)$.
The discussion of this section is intended for such primes, and philosophically they should lead us to results like infinite $C_q$-moment introduced in Section~\ref{section:intro}.
And this explains the title of the section.

Our most important goal in this section is to prove Theorem~\ref{thm:statistical results for relative class groups}.
We reach the theorem in several steps.
The first step is to show that the Hypothesis~\ref{hypothesis main} implies zero-probability of $\operatorname{rk}_q\operatorname{Cl}_K\leq r$.
\begin{theorem}\label{thm:P(rk X=r)=0}
Let $\mathcal{S}:=\mathcal{S}(G)$ ordered by a counting function $C$.
Let $G_1\subseteq H\subseteq G$ be a tower of subgroups of $G$, where $G_1$ is the stabilizer of $1$.
Let $q$ be a rational prime such that $q^l\mid[H:G_1]$, where $l\geq1$.
Let $\Omega$ be a subset of $G$ that is closed under invertible powering and conjugation such that for each $g\in\Omega$ we have that $e(g)\equiv\bmod{q^l}$.
If the Hypothesis~\ref{hypothesis main} hold for $(\mathcal{S},C,\Omega)$, i.e., for all non-negative integer $r$, we have
\begin{equation*}
	N_{\mathcal{S}_{\Omega}^r,C}(X)=o\left(N_{\mathcal{S},C}(X)\right),
\end{equation*}
then for all $r$, we have
\begin{equation*}
	\mathbb{P}_{\mathcal{S},C}(\operatorname{rk}_qq^{l-1}\operatorname{Cl}(K/\hat{K}^{H})\leq r)=0.
\end{equation*}
\end{theorem}
\begin{proof}
	Write $n:=[K:\mathbb{Q}]$.
	According to Theorem~\ref{thm:rz69 rel version} and Lemma~\ref{lemma: non-random prime}, for a fixed extension $K/\mathbb{Q}$ in $\mathcal{S}$, we have that
	\begin{equation*}
	    [\#\{p\nmid\lvert G\rvert:I_p\cap\Omega\neq\emptyset\}\geq r+2n]
	    \Rightarrow
	    [\operatorname{rk}_qq^{l-1}\operatorname{Cl}(K/\hat{K}^{H})>r].
    \end{equation*}
    The condition says that for all $r$ we have $N_{\mathcal{S}^r_{\Omega},C}(X)=o\left(N_{\mathcal{S},C}(X)\right)$.
    This implies that
    \begin{equation*}
    	\begin{aligned}
    		&\mathbb{P}_{\mathcal{S},C}(\operatorname{rk}_qq^{l-1}\operatorname{Cl}(K/\hat{K}^{H})\leq r)
    		\\
    		=&\lim_{X\to\infty}
    		\frac{
    			\#\{K\in\mathcal{S}\mid C(K)<X\text{ and }\operatorname{rk}_q q^{l-1}\operatorname{Cl}(K/\hat{K}^H)\leq r\}
    		}{
    			N_{\mathcal{S},C}(X)
    		}
    		\\
    		\leq&\lim_{X\to\infty}\frac{\sum_{i=0}^{r+2n}N_{\mathcal{S}^i_{\Omega},C}(X)}{N_{\mathcal{S},C}(X)}=0.
    	\end{aligned}
    \end{equation*}
\end{proof}
Then let's prove that if the probability of $\operatorname{rk}_q\operatorname{Cl}_K\leq r$ is bounded, then we have infinite moment.
\begin{theorem}\label{thm:infinite moment for bad primes}
Let $\mathcal{S}:=\mathcal{S}(G)$ with a counting function $C$.
Let $G_1\subseteq H\subseteq G$ be a tower of subgroups of $G$, where $G_1$ is the image of stabilizer of $1$.
Let $q$ be a rational prime and $l\geq1$ be an integer.
If there exists some constant $0\leq c<1$ such that for all $r=0,1,2,\dots$, we have
\[\mathbb{P}_{\mathcal{S},C}(\operatorname{rk}_qq^{l-1}\operatorname{Cl}(K/\hat{K}^{H})\leq r)\leq c,\]
then
\begin{equation*}
	\mathbb{E}_{\mathcal{S},C}(\lvert\operatorname{Hom}(q^{l-1}\operatorname{Cl}(K/\hat{K}^{H}),C_q)\rvert)=+\infty,
\end{equation*}
where $\hat{K}$ is the Galois closure of $K$, and $C_q$ is the cyclic group of order $q$.
\end{theorem}
\begin{proof}
	Write
	\begin{equation*}
		H_{\mathcal{S},C}(X):=
		\sum_{
			\substack{
				K\in\mathcal{S}\\
				C(K)<X
			}
		}\lvert\operatorname{Hom}(q^{l-1}\operatorname{Cl}(K/\hat{K}^{H}),C_q).
	\end{equation*}
	For each non-negative integer $r$, define
	\begin{equation*}
		R^{r}_{\mathcal{S},C}(X):=
		\#\{K\in\mathcal{S}\mid C(K)<X\text{ and }\operatorname{rk}_q q^{l-1}\operatorname{Cl}(K/\hat{K}^{H})=r\}.
	\end{equation*}
	Let $d$ be any fixed positive integer.
	By definition,
	\begin{equation*}
		\lim_{X\to\infty}\mathbb{P}_{\mathcal{S},C}(\operatorname{rk}_q q^{l-1}\operatorname{Cl}(K/\hat{K}^{H})\leq d)
		=\lim_{X\to\infty}\sum_{i=0}^{d}\frac{R^{i}_{\mathcal{S},C}(X)}{N_{\mathcal{S},C}(X)}.
	\end{equation*} 
	So, for each small number $\epsilon>0$, there exists some $N>0$ such that for all $X>N$, we have that
	\begin{equation*}
		\frac{\sum_{i=0}^{d}R^{l,i}_{\mathcal{S},C}(X)}{N_{\mathcal{S},C}(X)}\leq c+\epsilon.
	\end{equation*}
	Note that for a fixed $X$, the sum
	\begin{equation*}
		\sum_{i=0}^\infty\frac{R^{l,i}_{\mathcal{S},C}(X)}{N_{\mathcal{S},C}(X)}
	\end{equation*}
	is finite and equal to $1$.
	So, for each $X>N$, we have
	\begin{equation*}
		\begin{aligned}
			\frac{H_{\mathcal{S},C}(X)}{N_{\mathcal{S},C}(X)}
			=&\sum_{i=0}^\infty q^i\frac{R^{i}_{\mathcal{S},C}(X)}{N_{\mathcal{S},C}(X)}
			\\
			\geq&q^d(1-\sum_{i=0}^d\frac{R^{i}_{\mathcal{S},C}(X)}{N_{\mathcal{S},C}(X)})
			\\
			\geq& q^d(1-c-\epsilon).			
		\end{aligned}
	\end{equation*}
	By letting $d$ go to infinity, we see that for each positive number $M>0$, there exists some constant $N>0$ such that for all $X>N$, the ratio
	\begin{equation*}
		\frac{H_{\mathcal{S},C}(X)}{N_{\mathcal{S},C}(X)}>M,
	\end{equation*}
	hence the result.
\end{proof}
Before we move on to more concrete situations, let's discuss an example where the fields are not ordered by product of ramified primes.
\begin{example}[Discriminant]
Let's consider the set $\mathcal{S}:=\mathcal{S}(S_3)$ of non-Galois cubic extensions.
Let $\Omega:=\{(123),(132)\}$.
In this case we want to show that the Hypothesis~\ref{hypothesis main} is \emph{false} for $(\mathcal{S},d,\Omega)$ when the fields ordered by the \emph{absolute discriminant} $d$, despite the fact that $(123)$ clearly satisfies the condition that $3\mid e((123))$.

According to the work of Bhargava, Shankar, and Tsimerman~\cite[Theorem 8]{bhargava2013davenport}, there exists some positive constants $c_1,c_2$ such that
\[N_{\mathcal{S}_{\Omega}^0,d}(X)\sim c_1X\quad\text{and}\quad N_{\mathcal{S},d}(X)\sim c_2 X,\]
when the cubic fields are ordered by discriminant.
This already contradicts the Hypothesis~\ref{hypothesis main} when ordering fields by discriminant.
On the other hand, Proposition~\ref{prop:Dq-case} shows that, under some other assumptions deduced from Cohen-Lenstra Heuristics and Malle-Bhargava Heuristics, the Hypothesis~\ref{hypothesis main} holds for $(\mathcal{S},P,\Omega)$, where $P$ is the product of ramified primes.
This shows that the hypothesis itself is dependent on the ordering of number fields,
and we have to choose a suitable counting function.
\end{example}

\section{Dirichlet series and Tauberian Theorem}\label{section:dirichlet series and Tauberian Theorem}
In this section, we discuss the analytic properties of some Dirichlet series, which will be useful tools in later sections. 
\subsection{Tauberian Theorem}
Let's first present a Tauberian Theorem that is used in the paper repeatedly.
Recall that for the complex variable $s\in\mathbb{C}$, we write $s=\sigma+it$.
\begin{theorem}[Delange-Ikehara]\label{thm:Delange-Ikehara}\cite[Appendix II Theorem I]{narkiewicz2014elementary}
    Assume that the coefficients of a Dirichlet series are real and non-negative, and that it converges in the half-plane $\sigma>1$, defining a holomorphic function $f(s)$.
    Assume, moreover, that in the same half-plane one can write
    \[f(s)=\sum_{j=0}^qg_j(s)\log^{b_j}\left(\frac{1}{s-1}\right)(s-1)^{-\alpha_j}+g(s),\]
    where functions $g,g_0,\dots,g_q$ are holomorphic in the closed half plane $\sigma\geq1$, the $b_j$-s are non-negative integers, $\alpha_0$ is a positive real number, $\alpha_1,\dots,\alpha_q$ are complex numbers with $\Re\alpha_j<\alpha_0$, and $g_0(1)\neq0$.
    
    Then for the summatory function $A(x)=\sum_{n<x}a_n$ we have, for $x$ tending infinity,
    \[A(x)=\left(\frac{g_0(1)}{\Gamma(\alpha_0)}+o(1)\right)x(\log x)^{\alpha_0-1}(\log\log x)^{b_0}.\]
    If $f$ satisfies the same assumptions, except that $\alpha_0=0$ and $b_0\geq1$, then
    \[A(x)=(b_0g_0(1)+o(1))x\frac{(\log\log x)^{b_0-1}}{\log x}.\]
\end{theorem}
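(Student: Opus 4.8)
The statement is the classical Wiener--Ikehara Tauberian theorem, in the strengthened form due to Delange which allows the dominant singularity at $s=1$ to have the shape $\log^{b_0}\bigl(\tfrac1{s-1}\bigr)(s-1)^{-\alpha_0}$ rather than a simple pole. The plan is to isolate that dominant singularity, compare $f$ against an explicit reference Dirichlet series whose summatory function is already understood, and then exploit the hypothesis that the $a_n$ are non-negative to upgrade an averaged estimate to the pointwise asymptotic claimed. As a first move I would note that each summand $g_j(s)\log^{b_j}\bigl(\tfrac1{s-1}\bigr)(s-1)^{-\alpha_j}$ with $\Re\alpha_j<\alpha_0$, together with the regular remainder $g(s)$, is itself of the type covered by the theorem but with a strictly weaker singularity; arguing by induction on the finite list of exponents, these contribute only $o\bigl(x(\log x)^{\alpha_0-1}(\log\log x)^{b_0}\bigr)$ to $S(x)$. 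So it suffices to treat $f(s)=g_0(s)\log^{b_0}\bigl(\tfrac1{s-1}\bigr)(s-1)^{-\alpha_0}$ with $g_0$ regular and non-vanishing at $s=1$, and separately the degenerate variant $\alpha_0=0$, $b_0\ge 1$.

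For the reference series I would use that near $s=1$ one has $\zeta(s)=(s-1)^{-1}+O(1)$, hence $\log\zeta(s)=\log\tfrac1{s-1}+O(|s-1|)$, both with analytic continuation onto the punctured line $\sigma\ge 1$, $s\neq 1$. Thus $\zeta(s)^{\alpha_0}\bigl(\log\zeta(s)\bigr)^{b_0}$ carries exactly the singularity $\log^{b_0}\bigl(\tfrac1{s-1}\bigr)(s-1)^{-\alpha_0}$ up to a factor regular at $s=1$. Writing $g_0(s)=g_0(1)h(s)$ with $h$ regular and $h(1)=1$, I would set
\[R(s):=g_0(1)\,\zeta(s)^{\alpha_0}\bigl(\log\zeta(s)\bigr)^{b_0},\]
and compute the summatory function of its coefficients $r_n$ by the Selberg--Delange method: deform the Perron integral onto a Hankel-type contour hugging $s=1$, controlled by the classical zero-free region of $\zeta$, to obtain $\sum_{n<x}r_n=\bigl(\tfrac{g_0(1)}{\Gamma(\alpha_0)}+o(1)\bigr)x(\log x)^{\alpha_0-1}(\log\log x)^{b_0}$, and the analogous formula $\sum_{n<x}r_n=(b_0 g_0(1)+o(1))x\tfrac{(\log\log x)^{b_0-1}}{\log x}$ in the degenerate case.

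The Tauberian step then sets $D(s):=f(s)-R(s)$, whose singularity at $s=1$ is strictly weaker (the power of $(s-1)$ goes up, or that of $\log\tfrac1{s-1}$ drops) precisely because $f(s)/R(s)\to 1$; the goal becomes $\sum_{n<x}(a_n-r_n)=o\bigl(x(\log x)^{\alpha_0-1}(\log\log x)^{b_0}\bigr)$. For this I would pass to a first Ces\`aro mean, where an integrated form of the Wiener--Ikehara theorem applies to $D(s)/s$ because after one integration the associated transform is continuous up to $\sigma=1$; this yields the averaged estimate. To remove the averaging I would use $a_n\ge 0$, so that $x\mapsto\sum_{n<x}a_n$ is monotone, and sandwich it between the Ces\`aro means at $x(1-\epsilon)$ and $x(1+\epsilon)$; combined with the slow variation of $x(\log x)^{\alpha_0-1}(\log\log x)^{b_0}$, which changes only by a factor $1+O(\epsilon)$ under such a dilation, this converts the averaged estimate into the pointwise one, and letting $\epsilon\to 0$ concludes.

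The hard part will be precisely this de-smoothing: because $\alpha_0$ need not be an integer and an iterated logarithm is present, one cannot invoke Wiener--Ikehara for a simple pole as a black box, and one must check with care that the weaker singularities of $D$ really produce $o$ rather than merely $O$ of the main term, and that the $\epsilon$-dilations perturb the slowly varying normalising factor uniformly by $1+O(\epsilon)$ (with the extra $\log\log$ tracked correctly, and with $x/\log x$ replacing $x(\log x)^{\alpha_0-1}$ in the degenerate branch). Everything else is bookkeeping: the inductive peeling of the lower-order singular terms and the standard contour estimate for the reference series.
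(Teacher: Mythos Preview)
The paper does not prove this theorem; it is quoted verbatim from \cite[Appendix~II, Theorem~I]{narkiewicz2014elementary} and used as a black box throughout \S\ref{section:dirichlet series and Tauberian Theorem} and the later counting-fields arguments. There is therefore nothing in the paper to compare your proposal against.

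Your sketch is a reasonable outline of the Delange approach: build a reference series with the correct dominant singularity via $\zeta(s)^{\alpha_0}(\log\zeta(s))^{b_0}$, handle it by a Hankel contour (Selberg--Delange), and then apply a Tauberian argument to the difference using non-negativity and monotonicity to pass from smoothed to sharp sums. The places you flag as delicate are indeed the ones that require work in a full proof, and your induction on the list of subordinate singularities is the standard way to peel them off. If you want to turn this into an actual proof rather than a plan, the references to consult are Delange's original papers or Tenenbaum's \emph{Introduction to Analytic and Probabilistic Number Theory}, Chapter~II.7, where the contour and de-smoothing steps are carried out in detail.
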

See the paper of Delange~\cite{Delange54} for more results on Tauberian Theorems.
\subsection{Dirichlet L-series and some applications}
Recall that if $m>1$ is an integer and $\chi:(\mathbb{Z}/m\mathbb{Z})^*\to S^1$ is a Dirichlet character mod $m$, then the \emph{Dirichlet L-series} associated to $\chi$ is the series
\begin{equation*}
	L(\chi,s):=\sum_{n=1}^{\infty}\frac{\chi(n)}{n^s}.
\end{equation*}
It admits many analytic properties similar to those of Riemann zeta function \(\zeta(s)\).
Let's summarize them in the following statement.
\begin{proposition}
	\begin{enumerate}
		\item The series \(L(\chi,s)\) converges absolutely and uniformly in the domain \(\sigma\geq1+\delta\), for any \(\delta>0\).
		It therefore represents a holomorphic function on the half-plane \(\sigma>1\).
		\item When $\sigma>1$, we have \emph{Euler's identity}
		\begin{equation*}
			L(\chi,s)=\prod_{p}\frac{1}{1-\chi(p)p^{-s}}.
		\end{equation*}
		\item If $\chi$ is nontrivial, then it could be extended analytically to the whole complex plane.
		\item For any Dirichlet character $\chi$, the series $L(\chi,s)$ admits a zero-free region containing the closed half-plane $\sigma\geq1$.
	\end{enumerate}	
\end{proposition}
One can check its basic properties in some textbooks, say Neukirch~\cite[VII.2]{neukirch2013algebraic}.
The zero-free region could be viewed as a corollary of the zero-free region of Hecke $L$-series.
See Coleman~\cite{Coleman_1990} for example.
Let's prove an analytic tool that will be used later.
Recall that $\phi$ is the Euler's phi function.
\begin{lemma}\label{lemma: zeta((q,a);s)} 
	Let $m>1$ be an integer.
	Let $f:\mathbb{Z}/m\mathbb{Z}\to\mathbb{N}$ be a map such that $f(\alpha)\neq0$ for some $\alpha\in(\mathbb{Z}/m\mathbb{Z})^*$.
	By composing $\mathbb{Z}\to\mathbb{Z}/m\mathbb{Z}$, we also treat $f$ as a map $\mathbb{N}\to\mathbb{N}$.
	Define a Euler product
	\begin{equation*}
		F(s):=\prod_{p\text{ rational prime}}(1+f(p)p^{-s}).
	\end{equation*}
	Then $F(s)$ is absolutely and uniformly convergent in $\sigma>1+\delta$, for each $\delta>0$, and it represents a holomorphic function in the open half-plane $\sigma>1$.
	If $f$ is nonzero on $(\mathbb{Z}/m\mathbb{Z})^*$, then $F(s)$ admits an analytic continuation to the line $\sigma=1$ except at $s=1$.
	In particular, there exists some holomorphic function $g(s)$ at $s=1$ with $g(1)\neq0$ such that
	\begin{equation*}
		F(s)=g(s)(s-1)^{-\beta}
	\end{equation*}
	near $s=1$, where $\beta$ is given by the formula
	\begin{equation*}
		\beta=\frac{1}{\phi(m)}\sum_{\alpha\in(\mathbb{Z}/m)^*}f(\alpha).
	\end{equation*}
\end{lemma}
\begin{proof}
	Clearly $f$ is bounded above by some constant $N>0$, hence 
	\begin{equation*}
		\lvert F(s)\rvert\leq\prod_p(1+Np^{-\sigma}),
	\end{equation*}
	which implies that $F(s)$ is absolutely and uniformly convergent in $\sigma>1+\delta$, for each $\delta>0$.
	So it represents a holomorphic function in the half-plane $\sigma>1$.
	
	Let's write
	\begin{equation*}
		F(s)=\prod_{\alpha\in\mathbb{Z}/m\mathbb{Z}}\prod_{p\equiv\alpha\bmod{m}}(1+f(\alpha)p^{-s})=:\prod_{\alpha}F_\alpha(s).
	\end{equation*}
	It is clear that there are only finitely many primes $p\mid m$.
	And for a fixed factor $(1+f(p)p^{-s})$, it is a nonzero holomorphic function in the closed half-plane $\sigma\geq1$.
	So it suffices to prove the statement for each $F_\alpha(s)$ with $\alpha\in(\mathbb{Z}/m\mathbb{Z})^*$.
	
	Let's assume without loss of generality that $F(s)=F_\alpha(s)$ for some $\alpha\in(\mathbb{Z}/m\mathbb{Z})^*$.
	Since $F(s)$ is nonzero in the open half-plane $\sigma>1$, we can take the logarithm.
	For the primes $p>N$, we have that
	\begin{equation*}
		\lvert\log\prod_{p>N}(1+f(\alpha)p^{-s})-\frac{f(\alpha)}{\phi(m)}\sum_{\chi}\chi(\alpha)\log(\bar{\chi},s)\rvert\leq\sum_{p\leq N}\frac{f(\alpha)}{p^{\sigma}}+\sum_{p>N}\frac{2f(\alpha)}{p^{2\sigma-\epsilon}},
	\end{equation*}
	where $\chi$ runs over all characters modulo $m$, and $\epsilon>0$ is some small real number.
	The zero-free region of the Dirichlet $L$-series first shows that if $\chi$ is nontrivial then $\log L(\chi,s)$ is a holomorphic function in the closed half-plane $\sigma\geq1$.
	And the case when $\chi=\chi_0$ is the trivial character reduces to the Riemann zeta function.
	To be precise, $L(\chi_0,s)$ and $\zeta(s)$ differs by finitely many Euler factors, so it is a holomorphic function in $\mathbb{C}\backslash\{0,1\}$ with a simple pole at $s=1$.
	In particular, it is nonzero in the closed half-plane $\sigma\geq1$.
	So $F(s)$ could be extended analytically to the line $\sigma=1$ except at $s=1$.
	And the function defined by the formula
	\begin{equation*}
		g(s):=(s-1)^{\beta}F(s)
	\end{equation*}
	is a nonzero holomorphic function at $s=1$, hence the result.
\end{proof}
From the above proof, we obtain the following.
\begin{corollary}\label{cor:count (q,a) primes}
    Let $m>1$ be an integer.
    For each $\alpha\in(\mathbb{Z}/m\mathbb{Z})^*$, define
    \begin{equation*}
    	l_{m}^{1}(\alpha,s):=\sum_{p\equiv\alpha\bmod{m}}p^{-s}.
    \end{equation*}
    Then $l_{m}^1(s)$ is absolutely and uniformly convergent in the open half-plane $\sigma>1+\delta$ for any $\delta>0$, and it represents a holomorphic function in the half-plane $\sigma>1$.
    It could be extended analytically to the line $\sigma=1$ except at $s=1$.
    In particular, there exists some holomorphic function $g(x)$ at $s=1$ such that
    \begin{equation*}
    	l_{m}^1(\alpha,s)=\frac{1}{\phi(m)}\log\frac{1}{s-1}+g(s)
    \end{equation*}
    near $s=1$.
\end{corollary}
\begin{proof}
	Just compare $l_m^1(\alpha,1)$ and
	\begin{equation*}
		\frac{1}{\phi(m)}\sum_{\chi}\log L(\chi,s)
	\end{equation*}
	where $\chi$ runs over all primitive characters mod $m$.
\end{proof}
As a generalization, we have the following.
\begin{proposition}\label{prop: count (q,a) primes}
	Let $m>1$ be an integer.
	Let $f:\mathbb{N}\to\mathbb{N}$ be a multiplicative map such that for all $p\equiv q\bmod{m}$ we have that $f(p)=f(q)$ and we treat it also as a map defined on $(\mathbb{Z}/m\mathbb{Z})$.
	Define the series
	\begin{equation*}
		l_{f}^r(s):=
		\sum_{
			\substack{
				n\text{ square-free}\\
				\omega(n)=r
			}
		}f(n)n^{-s}.
	\end{equation*}
	Then $l_{f}^r(s)$ defines a holomorphic function in the open half-plane $\sigma>1$.
	If $f$ is nonzero on $(\mathbb{Z}/m\mathbb{Z})^*$, then $l_{f}^r(s)$ could be extended analytically to the line $\sigma=1$ except at $s=1$.
	In particular, there exists holomorphic functions $g_0(s),\dots,g_{r-1}(s)$ and a nonzero constant function $g_r(s)$ at $s=1$ such that
	\begin{equation*}
		l_{f}^r(s)=\sum_{j=0}^{r}g_j(s)\Bigl(\log\frac{1}{s-1}\Bigr)^j.
	\end{equation*}
\end{proposition}
\begin{proof}
	First of all, if $f$ is zero on $(\mathbb{Z}/m\mathbb{Z})^*$, then clearly $l_f^r(s)$ is a holomorphic function in the half-plane $\sigma>0$, for there are only finitely many primes $p\mid m$.
	Let's start with the simplest case where $f(\alpha_0)=1$ for a fixed $\alpha_0\in(\mathbb{Z}/m\mathbb{Z})^*$ and $f(\alpha)=0$ for all $\alpha\neq\alpha_0$ in $\mathbb{Z}/m\mathbb{Z}$.
	In this case, let's just write
	\begin{equation*}
		l_m^r(\alpha_0,s):=l^r_f(s).
	\end{equation*}
	If $r=1$, then this is exactly the Corollary~\ref{cor:count (q,a) primes}.
	Let's prove the statement by induction on $r$.
	Assume that $l_m^j(\alpha_0,s)$ satisfies the properties in the statement, for each $j=1,2,\dots,r-1$.
	One can check that the following identity holds:
	\begin{equation*}
		l_m^r(\alpha_0,s)=\frac{1}{r}l_m^1(\alpha_0,s)l_{m}^{r-1}(\alpha_0,s)+\sum_{j=2}^r\frac{(-1)^{j}}{r}l_{m}^1(\alpha_0,js)l_{m}^{r-j}(\alpha_0,s).
	\end{equation*}
	Then the property of $l_m^r(\alpha_0,s)$ just follows from the induction assumption and the above identity.
	
	Now assume that $f$ is any function that satisfies the condition in the statement.
	Let $(r_{\alpha})_{\alpha}$ denote a point in $\mathbb{N}^{m}$, where the index $\alpha$ is an element of $\mathbb{Z}/m\mathbb{Z}$.
	We see that
	\begin{equation*}
		\begin{aligned}
			l_f^r(s)=
			&\sum_{
				\substack{
					n\text{ square-free}\\
					\omega(n)=r
				}
			}\prod_{p\mid n}f(p)p^{-s}
			\\
			=&\sum_{(r_{\alpha})_{\alpha}}
			\prod_{\alpha\in\mathbb{Z}/m\mathbb{Z}}
			f(\alpha)^{r_{\alpha}}
			l^{r_\alpha}_m(\alpha,s)
		\end{aligned}		
	\end{equation*}
	where $(r_{\alpha})_{\alpha}$ runs over points in $\mathbb{N}^{m}$ such that
	\begin{equation*}
		\sum_{\alpha\in\mathbb{Z}/m\mathbb{Z}}r_{\alpha}=r.
	\end{equation*}
	This expression first holds formally.
	However since the coefficients of $l_f^r(s)$ are bounded, it is absolutely and uniformly convergent in the half-plane $\sigma>1+\delta$ for any $\delta>0$.
	So it holds as an equation of holomorphic functions in the half-plane $\sigma>1$.
	And the properties of $l_f^r(s)$ just follow from those of $l_{m}^r(\alpha,s)$.
\end{proof}
\subsection{Counting homomorphisms}
In this section, let's apply the analytic tools to prove a result that will be used later.

Let $G$ be a finite abelian group and let $K$ be a $G$-field, i.e., $K/\mathbb{Q}$ is a Galois extension such that $G(K/\mathbb{Q})\cong G$.
Let $A$ be a finite $G$-module.
Denote the group of id{\`e}les of $K$ by $J_K$, and let $S_{\infty}$ be the set of primes of $K$ at infinity.
Denote the set of $G$-morphisms $\operatorname{Hom}_G(J_K^{S_\infty},A)$ by $\mathcal{T}_K$.
For each rational prime $p$, let $\mathscr{O}_p:=\mathscr{O}_K\otimes\mathbb{Z}_p=\prod_{\mathfrak{p}\mid p}\mathscr{O}_{\mathfrak{p}}$, where $\mathscr{O}_K$ is the ring of integers of $K$.
When $p=\infty$, just let $\mathscr{O}_{\infty}:=K\otimes\mathbb{R}$.
\begin{definition}\label{def: product of ramified primes for morphisms}
	Let $c:\operatorname{Hom}_{G}(\mathscr{O}_p^*,A)\to\mathbb{N}$ be a weight function for each prime $p$ (including the infinite prime) defined as follows:
	\begin{equation*}
		c(\rho_p)=\left\{
		\begin{aligned}
			&1\quad\text{if }p\nmid P(K)\infty\text{ and }\operatorname{im}(\rho_p)\neq\{0\}\\
			&0\quad\text{otherwise.}
		\end{aligned}
		\right.
	\end{equation*}
	Let $P$ be the counting function defined for $\mathcal{T}_K$ via
	\begin{equation*}
		P(\rho):=\prod_{p}p^{c(\rho_p)},
	\end{equation*}
	where the value of $p=\infty$ is taken to be the real constant $e$.
	We here just call it the product of ramified primes of $\rho$ for simplicity.
\end{definition}
For each rational prime $p$, let $\mathcal{T}_{p}$ be a nonempty subset of $\operatorname{Hom}_{G}(\mathscr{O}_p^*,G)\backslash\{1\}$, where $1$ means the trivial map.
In other words, this is a nontrivial local condition for each prime $p$.
Denote its complement $\operatorname{Hom}_{G}(\mathscr{O}_p^*,G)-\mathcal{T}_{p}$ as $\mathcal{T}_p^c$.
Let $m$ be a fixed integer such that $\lvert A\rvert\mid m$.
For each non-negative integer $r$, define $\mathcal{T}_{K}^r$ to be the subset of $\mathcal{T}_K$ such that a $G$-morphisms $\rho$ is contained in $\mathcal{T}_{K}^r$ if and only if there are exact $r$ primes $p\nmid m$ such that $\rho_p\in\mathcal{T}_p$.
Now let's first give an Euler product to estimate the number of homomorphisms in $\mathcal{T}_K^r$ ordered by the product of ramified primes $P$.
\begin{proposition}\label{prop: counting homomorphisms}
	For each non-negative integer $r$, define the generating series
	\begin{equation*}
		F_{\mathcal{T}_{K}^r,P}(s):=\sum_{\rho\in\mathcal{T}_K^r}P(\rho)^{-s},
	\end{equation*}
	and let
	\begin{equation}\label{eqn: estimate for F(r,s)}
		E_{\mathcal{T}_{K}^r,P}(s):=\prod_{p}\bigl(\sum_{\rho\in\mathcal{T}_p^c}p^{-c(\rho)s}\bigr)
		\sum_{
			\substack{
				n\text{ square-free}\\
				\gcd(m,n)=1,\omega(n)=r
			}
		}\prod_{p\mid n}\bigl(\sum_{\rho\in\mathcal{T}_p}p^{-c(\rho)s}\bigr).
	\end{equation}
	Write $F_{\mathcal{T}_{K}^r,P}(s)$ and $E_{\mathcal{T}_{K}^r,P}(s)$ in the form of Dirichlet series:
	\begin{equation*}
		F_{\mathcal{T}_{K}^r,P}(s)=\sum_{n=1}^{\infty}a_nn^{-s}
		\quad\text{and}\quad
		E_{\mathcal{T}_{K}^r,P}(s)=\sum_{n=1}^\infty b_nn^{-s}.
	\end{equation*}
	For each positive integer $n$, we have that
	\begin{equation*}
		a_n\leq b_n.
	\end{equation*}
\end{proposition}
\begin{proof}
	For a fixed integer $n$, let $\mathcal{T}_K(n)$ be the subset of $\mathcal{T}_K$ such that a $G$-morphism $\rho$ is contained in $\mathcal{T}_K(n)$ if and only if the condition
	\begin{equation*}
		\rho_p\in\mathcal{T}_p\iff p\mid n
	\end{equation*}
	holds.
	Define the generating series for $\mathcal{T}_K(n)$ as
	\begin{equation*}
		F_{\mathcal{T}_{K}(n),P}(s):=\sum_{\rho\in\mathcal{T}_K(n)}P(\rho)^{-s}.
	\end{equation*}
	If we write every generating series in the form of a Dirichlet series, then we have the identity of series:
	\begin{equation*}
		F_{\mathcal{T}_{K}^r,P}(s)=
		\sum_{
			\substack{
				n\text{ square-free}\\
				\gcd(m,n)=1,\omega(n)=r
			}
		}F_{\mathcal{T}_{K}(n),P}(s).
	\end{equation*}
	This is a corollary of the fact that
	\begin{equation*}
		\mathcal{T}_{K}^r=\bigsqcup_{
			\substack{
				n\text{ square-free}\\
				\gcd(m,n)=1,\omega(n)=r
			}
		}\mathcal{T}_{K}(n).
	\end{equation*}
	Note that if $\gcd(m,n)=1$, then the series $F_{\mathcal{T}_{K}(n),P}(s)$ admits a Euler product formally:
	\begin{equation*}
		F_{\mathcal{T}_{K}(n),P}(s)=
		\prod_{p\nmid n}\bigl(\sum_{\rho\in\mathcal{T}^c_p}p^{-c(\rho)s}\bigr)
		\prod_{p\mid n}\bigl(\sum_{\rho\in\mathcal{T}_p}p^{-c(\rho)s}\bigr).
	\end{equation*}
	This is a corollary of the fact that
	\begin{equation*}
		\operatorname{Hom}_{G}(J_K^{S_\infty},A)=\bigoplus_{p}\operatorname{Hom}_G(\mathscr{O}_p^*,A).
	\end{equation*}
	And we can rewrite $F_{\mathcal{T}_{K}^r,P}(s)$ (formally) as follows:
	\begin{equation}\label{eqn: counting homomorphisms 1}
		F_{\mathcal{T}_{K}^r,P}(s)=\sum_{
			\substack{
				n\text{ square-free}\\
				\gcd(m,n)=1,\omega(n)=r
			}
		}
		\Bigg[\prod_{p\mid n}\bigl(\sum_{\rho\in\mathcal{T}_p}p^{-c(\rho)s}\bigr)
		\cdot
		\prod_{p\nmid n}\bigl(\sum_{\rho\in\mathcal{T}_p^c}p^{-c(\rho)s}\bigr)\Bigg].
	\end{equation}
	Now let's consider the following series
	\begin{equation}\label{eqn: counting homomorphisms 2}
		\begin{aligned}
			E_{\mathcal{T}_{K}^r,P}(s)
			=&\prod_{p}\bigl(\sum_{\rho\in\mathcal{T}_p^c}p^{-c(\rho)s}\bigr)
			\sum_{
				\substack{
					n\text{ square-free}\\
					\gcd(m,n)=1,\omega(n)=r
				}
			}\prod_{p\mid n}\bigl(\sum_{\rho\in\mathcal{T}_p}p^{-c(\rho)s}\bigr)
			\\
			=&\sum_{
				\substack{
					n\text{ square-free}\\
					\gcd(m,n)=1,\omega(n)=r
				}
			}\prod_{p\mid n}\bigl(\sum_{\rho\in\mathcal{T}_p}p^{-c(\rho)s}\bigr)
			\cdot\prod_{p}\bigl(\sum_{\rho\in\mathcal{T}_p^c}p^{-c(\rho)s}\bigr)
			,
		\end{aligned}		
	\end{equation}
	defined as in the statement of the proposition.
	Note that we can switch the order of the operations for the identities hold formally (omitting the issue of convergence).
	Write $E_{\mathcal{T}_{K}^r,P}(s)=\sum_{n=1}^{\infty}b_nn^{-s}$.
	By the comparison between (\ref{eqn: counting homomorphisms 1}) and (\ref{eqn: counting homomorphisms 2}), we see that
	\begin{equation*}
		a_n\leq b_n
	\end{equation*}
	for all $n\geq1$.	
\end{proof}
Then let's give the asymptotic behaviour of $\sum_{n<X}b_n$ under some assumptions on the local specifications, where $E_{\mathcal{T}_{K}^r,P}(s)=\sum_{n=1}^\infty b_nn^{-s}$ as in (\ref{eqn: estimate for F(r,s)}).
\begin{proposition}\label{prop: estimate of counting homomorphisms}
	Let $M$ be a fixed nonzero integer, and let $f_1,f_2:\mathbb{N}\to\mathbb{N}$ be two maps such that for almost all primes $p\equiv q\bmod{M}$, we have $f_i(p)=f_i(q)$ with $i=1,2$, and $f_1$ is nonzero for infinitely many primes.
	We treat $f_1$ also as a function defined on $(\mathbb{Z}/M\mathbb{Z})^*$.
	Define
	\begin{equation}\label{eqn: E(f1,f2,s)}
		E_{P}(f_1,f_2,s):=\prod_{p}(1+f_2(p)p^{-s})
		\sum_{
			\substack{
				n\text{ square-free}\\
				\gcd(M,n)=1,\omega(n)=r
			}
		}\prod_{p\mid n}(1+f_2(p)p^{-s})
	\end{equation}
	and write it as a Dirichlet series $E_{P}(f_1,f_2,s)=\sum_{n=1}^\infty b_nn^{-s}$, then
	\begin{equation*}
		\sum_{n<X}b_n\ll X(\log X)^{\beta-1}(\log\log X)^r,
	\end{equation*}
	as $X\to\infty$, where $\beta$ is given by the formula
	\begin{equation*}
		\beta=\frac{1}{\phi(M)}\sum_{a\in(\mathbb{Z}/M\mathbb{Z})^*}f_1(a).
	\end{equation*}	
\end{proposition}
\begin{proof}
	Note that $f_1$ is nonzero on $(\mathbb{Z}/M\mathbb{Z})^*$ by our assumption.
	And the asymptotic behaviour of $\sum_{n<X}b_n$ is just a corollary of Lemma~\ref{lemma: zeta((q,a);s)} and Proposition~\ref{prop: count (q,a) primes} and the Tauberian Theorem~\ref{thm:Delange-Ikehara}.
\end{proof}
Roughly speaking, if the local conditions $\{\mathcal{T}_p\}_p$ satisfy some arithmetic property, then we obtain the estimate of the summatory $N_{\mathcal{T}_{K}^r,P}(X)$ in a relatively simple way.

\section{Abelian extensions}\label{section: abelian extensions}
In this section, let \(G\) be a finite abelian group.
The action of $G$ on itself by (left) multiplication turns it into a transitive permutation group in $S_m$ where $m=\lvert G\rvert$.
So \(\mathcal{S}:=\mathcal{S}(G)\) is just the set of abelian \(G\)-fields.
See also Definition~\ref{def:Gamma-fields} and~\ref{def:set of fields}.
The main result of this section is the following.
\begin{theorem}\label{thm:counting abelian fields}
	Let \(1\notin\Gamma\) be a non-empty subset of \(G\) that is closed under invertible powering.
	Let
	\begin{equation*}
		\beta:=\sum_{
			\substack{
				g\in G\backslash\Gamma\\
				g\neq1
			}
		}[\mathbb{Q}(\zeta_{r(g)}):\mathbb{Q}],			
	\end{equation*}
	where $r(g)$ is the order of $g$.
	For each non-negative integer $r$, we have
	\begin{equation*}
		N_{\mathcal{S}_{\Gamma}^r,P}(X)\ll X(\log X)^{\beta-1}(\log\log X)^{r},
	\end{equation*}
	as $X\to\infty$.
\end{theorem}
See Definition~\ref{def: specifactions} for the notation $\mathcal{S}_{\Gamma}^r$.
From the theorem, we immediately obtain the following.
\begin{corollary}
	Let $1\notin\Gamma$ be any nonempty subset of $G$ that is closed under invertible powering.
	The Hypothesis~\ref{hypothesis main} is true for \((\mathcal{S},P,\Gamma)\).
\end{corollary}
\begin{proof}
	Counting abelian fields $N_{\mathcal{S},P}(X)$ ordered by product of ramified primes is given by the formula
	\begin{equation*}
		N_{\mathcal{S},P}(X)\sim cX(\log X)^{\alpha-1}
	\end{equation*}
	where $c>0$ is a constant and
	\begin{equation*}
		\alpha=\sum_{1\neq g\in G}[\mathbb{Q}(\zeta_{r(g)}):\mathbb{Q}].
	\end{equation*}
	One can check Wood~\cite{wood2010probabilities} for more details.
	Since $\Gamma$ is nonempty and contains elements other than $1$, it is clear that $\beta<\alpha$, where 
	\begin{equation*}
		\beta:=\sum_{
			\substack{
				g\in G\backslash\Gamma\\
				g\neq1
			}
		}[\mathbb{Q}(\zeta_{r(g)}):\mathbb{Q}]
	\end{equation*}
	as in Theorem~\ref{thm:counting abelian fields}.
	So the corollary follows immediately.
\end{proof}
Also we get the desired result on distribution of class groups.
\begin{proof}[Proof of Theorem~\ref{thm:abelian case relative class groups}]
	Recall that $H$ is some subgroup of $G$, and $q$ is a rational prime such that $q^l\mid\lvert G/H\rvert$ with $l\geq1$.
	Apply the above corollary to
	\begin{equation*}
		\Omega_{q^l}:=\{g\in G\mid e(g)\equiv0\bmod{q^l}\},
	\end{equation*}
	and we see that the Hypothesis~\ref{hypothesis main} holds for $(\mathcal{S},P,\Omega_{q^l})$, and Theorem~\ref{thm:abelian case relative class groups} just follows Theorem~\ref{thm:P(rk X=r)=0} and~\ref{thm:infinite moment for bad primes}.
\end{proof}
In the rest of this section, let's prove Theorem~\ref{thm:counting abelian fields}.
Using Class Field Theory, we know that there exists a one-to-one correspondence
\begin{equation*}
	\operatorname{Sur}(\operatorname{C}_{\mathbb{Q}},G)\leftrightarrow\operatorname{Sur}(\prod_{p\nmid\infty}\mathbb{Z}_p^*,G).
\end{equation*}
Here $\operatorname{C}_K$ means the id{\`e}les class group of the number field $K$.
Recall from Definition~\ref{def: product of ramified primes for morphisms} that we have a counting function $P$ for $\rho:\prod_{p\nmid\infty}\mathbb{Z}_p^*\to G$.
In particular, if $\chi:\operatorname{C}_{\mathbb{Q}}\to G$ is a continuous homomorphism, just define
\begin{equation*}
	P(\chi):=P(\prod_{p\nmid\infty}\chi_p).
\end{equation*}
Moreover, if $\chi$ corresponds to a class field $K$, then
\begin{equation*}
	P(K)=P(\chi).
\end{equation*}
So in this case, the product of ramified primes of a homomorphism $\rho$ is consistent with the one for fields.
Now that the homomorphisms are equipped with a counting function, we can proving the theorem of this section.
\begin{proof}[Proof of Theorem~\ref{thm:counting abelian fields}.]
	Write $m:=\lvert G\rvert$.
	If $p\nmid m$, then the weight function $c(\rho_p)$ (see Definition~\ref{def: product of ramified primes for morphisms}) is totally determined by $\rho_p(\zeta_p)$ where $\zeta_p$ here means the generator of the group of roots of unity in $\mathbb{Z}_p^*$.
	So for each rational prime $p$, we can give the following local condition in the form of a set
	\begin{equation*}
		\mathcal{T}_p:=\{\rho:\mathbb{Z}_p^*\to G\mid \rho(\zeta_p)\in\Gamma\}.
	\end{equation*}
	Let $\mathcal{T}_p^c$ be the complement $\operatorname{Hom}(\mathbb{Z}_p^*,G)-\mathcal{T}_p$, and let
	\begin{equation*}
		E_{\mathcal{T}_{\Gamma}^r,P}(s):=
		\prod_{p}\bigl(\sum_{\rho\in\mathcal{T}_p^c}p^{-c(\rho)s}\bigr)
		\sum_{
			\substack{
				n\text{ square-free}\\
				\gcd(m,n)=1,\omega(n)=r
			}
		}\prod_{p\mid n}\bigl(\sum_{\rho\in\mathcal{T}_p}p^{-c(\rho)s}\bigr),
	\end{equation*}
	be the series defined as in (\ref{eqn: estimate for F(r,s)}).
	If we write it as a Dirichlet series $E_{\mathcal{T}_{\Gamma}^r,P}(s)=\sum_{n=1}^\infty b_nn^{-s}$, then by Proposition~\ref{prop: counting homomorphisms}, for all $X>0$, we have
	\begin{equation*}
		N_{\mathcal{S}_{\Gamma}^r,P}(X)\leq\sum_{n<X}b_n.
	\end{equation*}
	It is clear that the size of $\mathcal{T}_p$ is totally determined by $p$ mod $m$.
	In other words, if we define
	\begin{equation*}
		f_1(p):=\#\{\rho\in\mathcal{T}_p\mid c(\rho)=1\}
		\quad\text{and}\quad
		f_2(p):=\#\{\rho\notin\mathcal{T}_p\mid c(\rho)=1\}
	\end{equation*}
	then $f_i(p)=f_i(q)$ whenever $p\equiv q\bmod{m}$, where $i=1,2$.	
	And we see that
	\begin{equation*}
		E_{\mathcal{T}_{\Gamma}^r,P}(s)=E_P(f_1,f_2,s)
	\end{equation*}
	as defined in (\ref{eqn: E(f1,f2,s)}).
	The conditions of Proposition~\ref{prop: estimate of counting homomorphisms} are all satisfied, and we have
	\begin{equation*}
		N_{\mathcal{S}_{\Gamma}^r,P}(X)\ll X(\log X)^{\beta'-1}(\log\log X)^r.
	\end{equation*} 
	where
	\begin{equation*}
		\beta'=\frac{1}{\phi(m)}\sum_{a\in(\mathbb{Z}/m\mathbb{Z})^*}f_1(a).
	\end{equation*}
	It suffices to show that $\beta'=\beta$.
	For each $n\mid m$, let $\pi_{n}:(\mathbb{Z}/m\mathbb{Z})^*\to(\mathbb{Z}/n\mathbb{Z})^*$ be the surjective homomorphism between multiplicative groups.
	Moreover we view $f_1$ as a function defined on $(\mathbb{Z}/m\mathbb{Z})^*$.
	Then we have the following computation:
	\begin{equation*}
		\begin{aligned}
			\beta'=&
			\frac{1}{\phi(m)}\sum_{a\in(\mathbb{Z}/m\mathbb{Z})^*}f_{1}(a)
			\\
			=&\frac{1}{\phi(m)}
			\sum_{a\in(\mathbb{Z}/m\mathbb{Z})^*}
			\#\{1\neq g\in G\backslash\Gamma\mid\pi_{r(g)}(a)=1\}
			\\
			=&\frac{1}{\phi(m)}
			\sum_{1\neq g\in G\backslash\Gamma}\#\pi_{r(g)}^{-1}(1)
			\\
			=&\frac{1}{\phi(m)}\sum_{1\neq g\in G\backslash\Gamma}\frac{\phi(m)}{\phi(r(g))}
			\\
			=&\sum_{1\neq g\in G\backslash\Gamma}\phi(r(g))^{-1}=\beta.
		\end{aligned}
	\end{equation*}
	And we are done for the proof.
\end{proof}

\section{\texorpdfstring{$D_4$}{D4} extensions}\label{section:D4 extensions}
In this section, let $D_4=\langle\sigma,\tau\mid\sigma^4=1=\tau^2,\tau^{-1}\sigma\tau=\sigma^3\rangle$ be the dihedral group of order $8$.
Define $\mathcal{S}:=\mathcal{S}(D_4)$ with $D_4$ being a subgroup of $S_4$, i.e., we are focused on the quartic extensions whose Galois closure are $D_4$-fields (see also Definition~\ref{def:Gamma-fields} and~\ref{def:set of fields}). 
\subsection{Main results}
We first introduce the definition of the \emph{conductor} for a quadratic extension of a quadratic number field, which will be used here as the counting function of the number fields.
\begin{definition}[Conductor]
	If \(K\) is a quadratic number field and \(L\) is a quadratic extension of \(K\),
	define the \emph{conductor} of the pair \((L,K)\) as
	\[C(L,K):=\frac{\operatorname{Disc}(L)}{\operatorname{Disc}(K)}.\]
	If \(L\) is a \(D_4\)-field and \(K\) denotes its (unique) quadratic subfield, then \(C(L,K)=C(L)\) (the conductor of \(L\)).
\end{definition}
Note that the notation given by the above definition agrees with the Artin conductor for the irreducible $2$-dimensional representation of $D_4$, if the quartic field has $D_4$-Galois closure.
See also \cite[Section 2.3]{altug2017number} for details.
We here follow this definition for the convenience of both computation and generalization to other quartic fields.
Define the following notation:
\begin{equation*}
	H_{\mathcal{S},C}(X):=\sum_{\substack{
			L\in\mathcal{S}\\
			C(L)<X
	}}\lvert\operatorname{Hom}(\operatorname{Cl}_L,C_2)\rvert.
\end{equation*}
In this section we are going to prove the following main result.
\begin{theorem}\label{thm:D4 case conductor counting fields}
	We have
	\[\mathbb{E}_{\mathcal{S},C}(\lvert\operatorname{Hom}(\operatorname{Cl}_L,C_2)\rvert)=\lim_{X\to\infty}\frac{H_{\mathcal{S},C}(X)}{N_{\mathcal{S},C}(X)}=+\infty,\]
	where $C$ is the conductor defined above.
\end{theorem}
\subsection{Preliminary tools}
We want to study the statistical behaviour of $\operatorname{Cl}_L[2^\infty]$ where $L\in\mathcal{S}$ for the conductor.
Let's first prove a lemma whose statement is similar to \cite[Lemma 5.1]{altug2017number}.
Recall that \(\omega(n)\) means the number of distinct prime factors dividing \(n\), and $\mu(n)$ is the M{\"o}bius function, and $\left(\dfrac{\cdot}{\cdot}\right)$ is the Legendre symbol.
\begin{lemma}\label{lemma:D4 conductor}
	For any $0<\epsilon<\frac{1}{2}$, we have
	\begin{equation*}
		\sum_{
			\substack{
				0<D<X\\
				D\text{ squarefree}
			}
		}
		\frac{2^{\omega(D)}}{D}
		\cdot
		\Biggl(
		\sum_{m=1}^\infty
		\sum_{
			\substack{
				n=1\\mn\neq\square
			}
		}^{D^{\frac{1}{2}+\epsilon}}
		\frac{\mu(m)}{m^2n}\Bigl(\frac{D}{mn}\Bigr)
		\Biggr)=o(X),
	\end{equation*}
	as $X\to\infty$.
\end{lemma}
\begin{proof}
	The proof itself is straightforward. 
	First of all, by comparing with $\zeta(s)^2$, we see that the Euler product
	\[\prod_{p\text{ rational prime}}(1+2p^{-s})\]
	admits a meromorphic continuation to the closed half-plane $\Re(s)\geq1$ with a unique pole of order $2$ at $s=1$.
	Then, according to \cite[Theorem 5.11]{montgomery2006multiplicative}, we have
	\begin{equation*}
		\sum_{n<X}\frac{1}{n}\sim c_1\log X
		\quad\text{and}\quad
		\sum_{
			\substack{
				0<D<X\\
				D\text{ squarefree}
			}
		}
		\frac{2^{\omega(D)}}{D}\sim c_2(\log X)^2
	\end{equation*}
	for some constant $c_1,c_2>0$.
	So, we know that
	\[\begin{aligned}
		&\Big\lvert
		\sum_{\substack{
				0<D<X\\
				D\text{ squarefree}
		}}
		\frac{2^{\omega(D)}}{D}\cdot
		\Bigl(
		\sum_{m=1}^{\infty}
		\sum^{D^{\frac{1}{2}+\epsilon}}_{\substack{
				n=1\\
				mn\neq\square
		}}\frac{\mu(m)}{m^2n}\bigl(\frac{D}{mn}\bigr)
		\Bigr)
		\Big\rvert
		\\
		\leq&\sum_{\substack{
				0<D<X\\
				D\text{ squarefree}
		}}
		\frac{2^{\omega(D)}}{D}\cdot
		\Bigl(
		\sum_{m=1}^\infty\sum_{n=1}^{X}\frac{1}{m^2n}
		\Bigr)
		\\
		\ll&(\log X)^3=o(X),\text{ as }X\to\infty.
	\end{aligned}\]
\end{proof}
Using the above lemma, we can prove the following proposition, which is similar to \cite[Proposition 5.2]{altug2017number}.
\begin{proposition}\label{prop:D4 conductor}
	We have:
	\begin{equation}\label{eqn:prop D4 conductor}
		\begin{aligned}
			&\sum_{
				\substack{
					[K:\mathbb{Q}]=2\\
					0<\operatorname{Disc}(K)<X
				}
			}
			\frac{L(1,K/\mathbb{Q})}{L(2,K/\mathbb{Q})}\cdot\frac{2^{\omega(\operatorname{Disc}(K))}}{\operatorname{Disc}(K)}
			\\
			=&\sum_{
				\substack{
					[K:\mathbb{Q}]=2\\
					0<\operatorname{Disc}(K)<X
				}
			}
			\frac{2^{\omega(\operatorname{Disc}(K))}}{\operatorname{Disc}(K)}
			\cdot
			\sum_{
				\substack{
					0<a,b<\infty\\
					(\operatorname{Disc}(K),ab)=1
				}
			}\frac{\mu(a)}{a^3b^2}+o(X),
			\\
			\text{and }&\sum_{
				\substack{
					[K:\mathbb{Q}]=2\\
					-X<\operatorname{Disc}(K)<0
				}
			}\frac{L(1,K/\mathbb{Q})}{L(2,K/\mathbb{Q})}\cdot\frac{2^{\omega(\operatorname{Disc}(K))}}{\operatorname{Disc}(K)}
			\\
			=&\sum_{
				\substack{
					[K:\mathbb{Q}]=2\\
					-X<\operatorname{Disc}(K)<0
				}
			}
			\frac{2^{\omega(\operatorname{Disc}(K))}}{\operatorname{Disc}(K)}
			\cdot
			\sum_{\substack{
					0<a,b<\infty\\
					(\operatorname{Disc}(K),ab)=1
				}
			}\frac{\mu(a)}{a^3b^2}+o(X),
		\end{aligned}
	\end{equation}
	as $X\to\infty$, where $L(s,K/\mathbb{Q})=\sum_n\frac{\chi_K(n)}{n^s}$ and $\chi_K$ is the quadratic character associated to $K$.
\end{proposition}
\begin{proof}
	The proof is similar to that of \cite[Proposition 5.2]{altug2017number}.
	According to \cite[(17)]{altug2017number}, we have
	\begin{equation*}
		\frac{L(1,K/\mathbb{Q})}
		{L(2,K/\mathbb{Q})}
		=\frac{1}
		{L(2,K/\mathbb{Q})}
		\cdot
		\sum_{n=1}^{\lvert\operatorname{Disc}(K)\rvert^{\frac{1}{2}+\epsilon}}
		\frac{\chi_K(n)}
		{n}
		+O_\epsilon\Biggl(
		\frac{\log(\lvert\operatorname{Disc}(K)\rvert)}
		{\lvert\operatorname{Disc}(K)\rvert^\epsilon}
		\Biggr).
	\end{equation*}
	Let's rewrite the left-hand sides of (\ref{eqn:prop D4 conductor}) as
	\begin{equation}\label{eqn:prop D4 conductor 1}
		\sum_{
			\substack{
				[K:\mathbb{Q}]=2\\
				0<\operatorname{Disc}(K)<X
			}
		}\frac{2^{\omega(\operatorname{Disc}(K))}}{\operatorname{Disc}(K)}\cdot
		\Biggl(
		\frac{1}
		{L(2,K/\mathbb{Q})}
		\cdot\sum_{n=1}^{\lvert\operatorname{Disc}(K)\rvert^{\frac{1}{2}+\epsilon}}
		\frac{\chi_K(n)}
		{n}
		+O_\epsilon\Bigl(
		\frac{\log(\lvert\operatorname{Disc}(K)\vert)}
		{\lvert\operatorname{Disc}(K)\rvert^\epsilon}
		\Bigr)
		\Biggr),
	\end{equation}
	and similar expression for imaginary quadratic fields.
	Note that
	\begin{equation*}
		\sum_{
			\substack{
				[K:\mathbb{Q}]=2\\
				0<\lvert\operatorname{Disc}(K)\rvert<X
			}
		}
		\frac{2^{\omega(\operatorname{Disc}(K))}}
		{\operatorname{Disc}(K)}
		\frac{\log(\lvert\operatorname{Disc}(K)\rvert)}
		{\lvert\operatorname{Disc}(K)\rvert^\epsilon}
		=O_\epsilon(1),
	\end{equation*}
	we focus on the remaining terms.
	Then by \cite[(20)]{altug2017number}, we know that
	\begin{equation}\label{eqn:prop D4 conductor 2}
		\begin{aligned}
			&\frac{1}
			{L(2,K/\mathbb{Q})}
			\sum_{n=1}^{\lvert\operatorname{Disc}(K)\rvert^{\frac{1}{2}+\epsilon}}
			\frac{\chi_K(n)}{n}
			\\
			=&\sum_{
				\substack{
					0<a,b<\infty\\
					(\operatorname{Disc}(K),ab)=1
				}
			}
			\frac{\mu(a)}
			{a^3b^2}
			+\sum_{n=1}^{\lvert\operatorname{Disc}(K)\rvert^{\frac{1}{2}+\epsilon}}
			\frac{\chi_K(n)}
			{n}
			\sum^{\infty}_{
				\substack{
					m=1\\
					mn\neq\square
				}
			}
			\frac{\mu(m)\chi_K(m)}
			{m^2}.
		\end{aligned}
	\end{equation}
	By substituting (\ref{eqn:prop D4 conductor 2}) back to (\ref{eqn:prop D4 conductor 1}), we have
	\begin{equation}\label{eqn:prop D4 conductor 3}
		\sum_{
			\substack{
				[K:\mathbb{Q}]=2\\
				0<\operatorname{Disc}(K)<X
			}
		}
		\frac{2^{\omega(\operatorname{Disc}(K))}}
		{\operatorname{Disc}(K)}
		\Biggl(
		\sum_{
			\substack{
				0<a,b<\infty\\
				(\operatorname{Disc}(K),ab)=1
			}
		}
		\frac{\mu(a)}
		{a^3b^2}
		+\sum_{n=1}^{\lvert\operatorname{Disc}(K)\rvert^{\frac{1}{2}+\epsilon}}
		\frac{\chi_K(n)}
		{n}
		\sum^{\infty}_{
			\substack{
				m=1\\
				mn\neq\square
			}
		}
		\frac{\mu(m)\chi_K(m)}
		{m^2}
		\Biggr),
	\end{equation}
	and similar expression for imaginary quadratic fields.
	By Lemma~\ref{lemma:D4 conductor}, we know that
	\begin{equation*}
		\sum_{
			\substack{
				[K:\mathbb{Q}]=2\\
				0<\operatorname{Disc}(K)<X
			}
		}
		\frac{2^{\omega(\operatorname{Disc}(K))}}
		{\operatorname{Disc}(K)}
		\sum_{n=1}^{\lvert\operatorname{Disc}(K)\rvert^{\frac{1}{2}+\epsilon}}
		\frac{\chi_K(n)}
		{n}
		\sum^{\infty}_{
			\substack{
				m=1\\
				mn\neq\square
			}
		}
		\frac{\mu(m)\chi_K(m)}
		{m^2}
		=o(X).
	\end{equation*}
	Similarly for the case when $\operatorname{Disc}(K)<0$.
	And we are done for the proof.
\end{proof}
\subsection{Proof of Theorem~\ref{thm:D4 case conductor counting fields}}
Let's prove the theorem in this section.
We first need a summatory function with two variables.
In this section, let $d_K$ be the absolute discriminant of the quadratic field $K$.
Define
\begin{equation*}
	H_C(X,Y):=
	\sum_{
		\substack{
			L\in\mathcal{S},C(L)<X\\
			d_K<Y
		}
	}2^{\omega(P(K))},
\end{equation*}
where $\omega(n)$ is the number of different prime factors of $n$, and $K$ is the unique quadratic subfield of $L$.
Following the same idea, let
\begin{equation*}
	H_C(X):=\sum_{L\in\mathcal{S},C(L)<X}2^{\omega(P(K))}.
\end{equation*}
See also \cite[Theorem 4.3]{altug2017number}.
\begin{proof}[Proof of Theorem~\ref{thm:D4 case conductor counting fields}.]
	The basic idea of the proof is similar to that of \cite[Theorem 2]{altug2017number}.
	Because of Theorem~\ref{thm:S1 invariant part}, it suffices to show that as $X\to\infty$ we have that
	\begin{equation*}
		N_{\mathcal{S},C}(X)=o(H_C(X)).
	\end{equation*}
	Because Theorem~\ref{thm:rz69} implies that 
	\begin{equation*}
		\lvert\operatorname{Hom}(\operatorname{Cl}_L,C_2)\rvert\geq 2^{\omega(P(K))-6}.
	\end{equation*}
	Second, let $\chi_{[0,1]}$ is the characteristic function of $[0,1]$.
	Then we can write
	\[\begin{aligned}
		H_C(X,X^\beta)=
		&\sum_{\substack{
				[K:\mathbb{Q}]=2\\
				d_K<X^\beta}}
			2^{\omega(P(K))}\cdot
		\sum_{\substack{
				[L:\mathbb{Q}]=2\\
				L\in\mathcal{S}}}
		\chi_{[0,1]}\left(\frac{\lvert\operatorname{Disc}(K)\operatorname{Nm}_{K/\mathbb{Q}}(\operatorname{Disc}(L/K))\rvert}{X}\right),
	\end{aligned}\]
	where $0<\beta<1$.
	For $\epsilon>0$, there exists some compactly supported smooth functions $\varphi^{\pm}:\mathbb{R}_{\geq0}\to\mathbb{R}_{\geq0}$ such that $\varphi^{\pm}-\chi_{[0,1]}$ takes value in $\mathbb{R}^{\pm}$ and that $\operatorname{Vol}(\varphi^{\pm})=1\pm\epsilon$.
	Then \cite[Lemma 4.5 and (13)]{altug2017number} implies that
	\[\begin{aligned}
		&\sum_{\substack{
				[K:\mathbb{Q}]=2\\
				d_K<X^\beta}}
			2^{\omega(P(K))}\cdot
		\sum_{\substack{
				[L:\mathbb{Q}]=2\\
				L\in\mathcal{S}}}
		\varphi^{\pm}
		\Bigl(
		\frac{d_K\lvert\operatorname{Nm}_{K/\mathbb{Q}}(\operatorname{Disc}(L/K))\rvert}{X}
		\Bigr)
		\\
		=&\sum_{\substack{
				[K:\mathbb{Q}]=2\\
				d_K<X^\beta}}
		\frac{1\pm\epsilon}{\zeta(2)}\cdot\frac{L(1,K/\mathbb{Q})}{L(2,K/\mathbb{Q})}\cdot\frac{2^{\omega(P(K))-r_2(K)}}{d_K}X
		\\
		+&O_\epsilon
		\Bigl(
		\sum_{\substack{
				[K:\mathbb{Q}]=2\\
				d_K<X^\beta}
			}
			2^{2\omega(P(K))}
		\lvert\operatorname{Disc}(K)\rvert^{-\frac{1}{4}+\epsilon}X^{\frac{1}{2}+\epsilon}
		\Bigr),
	\end{aligned}\]
	where $r_2(K)$ means the number of complex embeddings of $K$.
	Note that the error term is bounded by $O_\epsilon(X^{1/2+3\beta/4+\epsilon})$.
	So, when $\beta<2/3$, the error term is bounded by $o(X)$.
	By letting $\epsilon$ goes to $0$, it suffices to prove that
	\[\sum_{\substack{
			[K:\mathbb{Q}]=2\\\lvert\operatorname{Disc}(K)\rvert<Y}}
	\frac{L(1,K/\mathbb{Q})}{L(2,K/\mathbb{Q})}\frac{2^{\omega(P(K))}}{d_K}\sim c(\log Y)^2,\]
	where $c$ is a positive constant.
	Now Proposition~\ref{prop:D4 conductor} says that it suffices to prove that
	\begin{equation}\label{eqn:thm D4 case conductor counting fields}
		\begin{aligned}
			&\sum_{\substack{
					[K:\mathbb{Q}]=2\\
					0<\operatorname{Disc}(K)<Y}}
			\frac{L(1,K/\mathbb{Q})}{L(2,K/\mathbb{Q})}\cdot\frac{2^{\omega(P(K))}}{\operatorname{Disc}(K)}\\
			=&\sum_{\substack{
					[K:\mathbb{Q}]=2\\
					0<\operatorname{Disc}(K)<Y}}
			\frac{2^{\omega(P(K))}}{\operatorname{Disc}(K)}
			\cdot\sum_{\substack{
					0<a,b<\infty\\
					(\operatorname{Disc}(K),ab)=1}}
			\frac{\mu(a)}{a^3b^2}\sim c_1(\log Y)^2,
			\\
			\text{and }&\sum_{\substack{
					[K:\mathbb{Q}]=2\\
					-Y<\operatorname{Disc}(K)<0
			}}
			\frac{L(1,K/\mathbb{Q})}{L(2,K/\mathbb{Q})}\cdot\frac{f(L)}{\operatorname{Disc}(K)}
			\\
			=&\sum_{\substack{
					[K:\mathbb{Q}]=2\\
					-Y<\operatorname{Disc}(K)<0}}
			\frac{2^{\omega(P(K))}}{\operatorname{Disc}(K)}
			\cdot\sum_{\substack{
					0<a,b<\infty\\
					(\operatorname{Disc}(K),ab)=1}}
			\frac{\mu(a)}{a^3b^2}\sim c_2(\log Y)^2.
		\end{aligned}
	\end{equation}
	Using \cite[(21)]{altug2017number},
	\begin{equation*}
		\sum_{
			\substack{
				0<a,b<\infty\\
				(\operatorname{Disc}(K),ab)=1
			}
		}
		\frac{\mu(a)}
		{a^3b^2}
		=\frac{\zeta(2)}
		{\zeta(3)}
		\prod_{p\mid\operatorname{Disc}(K)}
		\frac{1-\frac{1}{p^2}}
		{1-\frac{1}{p^3}},
	\end{equation*}
	we get
	\begin{equation*}
		\lim_{Y\to\infty}
		\frac{\zeta(2)}
		{\zeta(3)\log(Y)^2}
		\sum_{
			\substack{
				1<D<Y\\
				D\text{ squarefree}
			}
		}
		\frac{2^{\omega(D)}}{D}
		\prod_{p\mid D}
		\frac{1-\frac{1}{p^2}}
		{1-\frac{1}{p^3}}
		=\zeta(2)\prod_{p\in\mathcal{P}}
		(1-\frac{3}{p^2}-\frac{1}{p^3}+\frac{6}{p^4}-\frac{3}{p^5}).
	\end{equation*}
	We can do the similar computation when the squarefree $D$ takes $D\equiv\pm1\bmod{4}$ and so on, i.e., discriminant of quadratic number fields belongs to different conjugacy classes (see \cite[(23)]{altug2017number} for example). 
	Similar result holds when $\operatorname{Disc}(K)<0$.
	This proves (\ref{eqn:thm D4 case conductor counting fields}), hence the theorem.
\end{proof}

\section{Dihedral extensions in general}\label{section:Dihedral extensions}
In this section, we focus on the case when the Galois group $G$ is a Dihedral group.
We use the following notation to explain what we mean by dihedral extensions in this section.
\begin{definition}
    Let \(G\) be a finite group.
    We call \(G\) a dihedral group if there exists a split short exact sequence
    \begin{equation*}
    	1\to C_m\to G\to C_2\to1,
    \end{equation*}
    of groups, where \(C_m\) is the cyclic group of order \(m\), and \(m\geq3\) is an integer, such that for the generator \(g_2\in C_2\) and for a generator \(g_m\in C_m\) we have \(g_2g_mg_2^{-1}=g_m^{-1}\).
    In this case we just write \(G=D_m\)
\end{definition}
In this section, let $m$ be a fixed integer$\geq3$.
Define 
\begin{equation*}
	\mathcal{S}:=\{L\in\mathcal{S}(D_m)\mid S^1\cap\hat{L}=\{\pm1\}\},
\end{equation*}
where $\hat{L}$ is the Galois closure of $L$.
See also Definition~\ref{def:Gamma-fields} and~\ref{def:set of fields}.
Roughly speaking, we only consider the fields whose group of roots of unity is the same as the base field.
For example, the closure $\hat{L}$ cannot contain $\mathbb{Q}(\sqrt{-1})$ or $\mathbb{Q}(\sqrt{-3})$ as a subfield.
We present our main result of this section as follows.
Recall that for an element $g\in G$, let $r(g)$ be its order.
\begin{theorem}\label{thm: estimate of Dq}	
	Let $m=q^l$, where $q$ is an odd prime and $l$ is a positive integer.
	Let \(\Omega=C_m\backslash\{1\}\) be the non-empty subset of \(D_m\).
	Assume that the Cohen-Lenstra Heuristics holds for quadratic number fields, 
	or to be precise, assume that as $X\to\infty$ we have
	\begin{equation*}
		\sum_{\substack{
				K_2=\mathbb{Q}(\sqrt{d})\\
				P(K_2)<X
		}}\lvert\operatorname{Hom}(\operatorname{Cl}_{K_2},C_m)\rvert\ll X.
	\end{equation*}
	Then for each \(r=1,2,\dots\), we have
	\begin{equation*}
		N_{\mathcal{S}_{\Omega}^r,P}(X)\ll X(\log\log X)^{r+1},
	\end{equation*}
	as $X\to\infty$.
\end{theorem}
When $m=4$, we are looking at $D_4$-fields, but this time they are ordered by the product of ramified primes.
And we can show the following on field-counting.
\begin{theorem}\label{thm: D4 counting fields P}
	Let $m=4$, and let $\Omega:=\{g_4,g_4^3\}$ where $g_4$ is the generator of $C_4$.
	Then for each non-negative integer $r$, we have that
	\begin{equation*}
		N_{\mathcal{S}_{\Omega}^r,P}(X)\ll X(\log X)^2(\log\log X)^{r+1}.
	\end{equation*}
\end{theorem}
We'll prove these two theorems later in this section.
Although in general, the moments of class groups are unknown even for quadratic fields, we still have some proven results.
And let's see an example.
\begin{example}
	Let $m=3$, the group $D_3$ is just $S_3$.
	And let $\Omega=\{(123),(132)\}$.
	The set $\mathcal{S}_{\Omega}^r$ means the set of non-Galois cubic fields such that its Galois closure contains only $\{\pm1\}$ as its group of roots of unity and such that it has exactly $r$ totally ramified primes other than $2$ and $3$.
	By Davenport and Heilbronn~\cite{Davenport1971Cubic}, we know that there exists some constant $c_3>0$ such that
	\begin{equation*}
		\mathbb{E}_{\mathcal{S}(C_2),P}(\lvert\operatorname{Hom}(\operatorname{Cl}_K,C_3)\rvert)=c_3,
	\end{equation*}
	thereby the conditions of Theorem~\ref{thm: estimate of Dq} are all satisfied.
	So as $X\to\infty$ we have
	\begin{equation*}
		N_{\mathcal{S}_{\Omega}^r,P}(X)\ll X(\log\log X)^{r+1}.
	\end{equation*}
\end{example}
One of the applications of Theorem~\ref{thm: estimate of Dq} is for the study of the distribution of class groups,
because we can verify the Hypothesis~\ref{hypothesis main} provided that the conditions of the theorem hold.
\begin{proposition}\label{prop:Dq-case}
	Let $m=q^l$, where $q$ is an odd prime and $l$ is a positive integer.
	Let \(\Omega=\{g\in C_m\mid g\neq1\}\).
	Assume that the the Cohen-Lenstra Heuristics hold for quadratic number fields, or to be precise, as $X\to\infty$, we have
	\[\sum_{\substack{
			K_2=\mathbb{Q}(\sqrt{d})\\
			P(K_2)<X
	}}\lvert\operatorname{Hom}(\operatorname{Cl}_{K_2},C_m)\rvert\ll X,\]
	and the Malle-Bhargava Heuristics hold for \(D_m\)-fields, or as $X\to\infty$, we have that
	\begin{equation*}
		N_{\mathcal{S},P}(x)\gg X\log X,
	\end{equation*}
	where
	\begin{equation*}
		\beta=\sum_{1\neq g\in C_m}[\mathbb{Q}(\zeta_{r(g)}):\mathbb{Q}].
	\end{equation*}
	Then the Hypothesis~\ref{hypothesis main} holds for $(\mathcal{S},P,\Omega)$.
	Also, for all $r=0,1,2,\dots$, we have
	\begin{equation*}
		\mathbb{P}_{\mathcal{S},P}(\operatorname{rk}_q\operatorname{Cl}_K\leq r)=0\quad\text{and}\quad\mathbb{E}_{\mathcal{S},P}(\lvert\operatorname{Hom}(\operatorname{Cl}_K,C_q)\rvert)=+\infty.
	\end{equation*}
\end{proposition}
\begin{proof}
	Theorem~\ref{thm: estimate of Dq} together with the hypothesis on $N_{\mathcal{S},P}(X)$ shows that the Hypothesis~\ref{hypothesis main} is true in this case.
	And the rest just follows from Theorem~\ref{thm:P(rk X=r)=0} and~\ref{thm:infinite moment for bad primes}.
\end{proof}
Similarly, when $m=4$, we have the following results on the higher torsion part of $\operatorname{Cl}_L$.
\begin{proposition}\label{prop: D4 dist of 2CL}
	Let $m=4$.
	If the Malle-Bhargava Heuristics is true for $\mathcal{S}$, or we have
	\begin{equation*}
		N_{\mathcal{S},P}(X)\gg X(\log X)^3
	\end{equation*}
	as $X\to\infty$, then for each non-negative integer $r$, we have 
	\begin{equation*}
		\mathbb{P}_{\mathcal{S},P}(\operatorname{rk}_{2}2\operatorname{Cl}_L\leq r)=0
	\end{equation*}
	where $L$ runs over fields in $\mathcal{S}$ ordered by $P$.
	In addition,
	\begin{equation*}
		\mathbb{E}_{\mathcal{S},P}(\lvert\operatorname{Hom}(2\operatorname{Cl}_L,C_2)\rvert)=+\infty.
	\end{equation*}
\end{proposition}
\begin{proof}
	Using Theorem~\ref{thm: D4 counting fields P} and the assumption from Malle-Bhargava Heuristics, we see that the Hypothesis~\ref{hypothesis main} holds for $(\mathcal{S},\{g_4,g_4^3\},P)$ where $g_4$ is the generator of $C_4$.
	So the rest just follows from Theorem~\ref{thm:P(rk X=r)=0} and~\ref{thm:infinite moment for bad primes}.
\end{proof}
Though these results require additional conditions deduced from the Malle-Bhargava Heuristics which is widely open for now,
it indicates that the structure of the class groups $\operatorname{Cl}_L[2^\infty]$ of $D_4$-fields admits nontrivial higher torsion part in the sense of statistics.

Let's start proving the two theorems above.
First we summarize the properties we need from Class Field Theory and group cohomology as the preliminaries.
For a number field $K$, if $\mathfrak{p}$ is a prime of $K$, then let $\mathscr{O}_{\mathfrak{p}}:=\{x\in K_{\mathfrak{p}}\mid\lvert x\rvert_{\mathfrak{p}}\leq1\}$.
And let $\mathscr{O}_{\mathfrak{p}}^*:=\{x\in K_{\mathfrak{p}}\mid\lvert x\rvert_{\mathfrak{p}}=1\}$ be the units of $\mathscr{O}_{\mathfrak{p}}$.
In addition, if $K$ is a $G$-field (Galois over $\mathbb{Q}$ with $G\cong G(K/\mathbb{Q})$), then for each rational prime $p$, define
\begin{equation*}
	\mathscr{O}_p:=\prod_{\mathfrak{p}\mid p}\mathscr{O}_{\mathfrak{p}}=\mathscr{O}_K\otimes\mathbb{Z}_p,
\end{equation*}
where $\mathscr{O}_K$ is the ring of integers of $K$.
Clearly, since $K$ is Galois with $G(K/\mathbb{Q})\cong G$, we see that $\mathscr{O}_p$ is also equipped with an action of $G$.
\begin{proposition}\label{prop: CFT and dihedral extensions}
	Let $G$ be a finite group and let $K$ be a $G$-field.
	\begin{enumerate}
		\item Let $A$ be a finite $G$-module.
		There is a one-to-one correspondence between
		\begin{equation*}
			\operatorname{Sur}_{G}(\operatorname{C}_K,A)\leftrightarrow\{L\in\mathcal{S}(A,K)\mid L\text{ is Galois over }\mathbb{Q}\text{ and }G(L/\mathbb{Q})\in\operatorname{EXT}(G,A)\}.
		\end{equation*}
		\item If $m$ is odd, equip $C_m$ with the $C_2$-action by $g_2\cdot h=h^{-1}$ where $g_2$ is the generator of $C_2$ and $h\in C_m$.
		Then
		\begin{equation*}
			H^2(C_2,C_m)=0.
		\end{equation*}
	\end{enumerate}
\end{proposition}
\begin{proof}[Sketch of the proof]
	(1):
	Note that Class Field Theory says that if $L/K$ is an abelian $A$-extension, then
	\begin{equation*}
		\operatorname{C}_K/\operatorname{Nm}_{L/K}\operatorname{C}_L\cong A.
	\end{equation*}
	In addition $L/\mathbb{Q}$ is Galois if and only if $\operatorname{Nm}_{L/K}\operatorname{C}_L$ is a well-defined $G$-module.
	The ``only if'' part is easy to see.
	The ``if'' part follows from the fact that if $L'$ is a conjugate of $L$ in the Galois closure, then the Galois conjugate will induce another $A$-extension
	\begin{equation*}
		\operatorname{C}_K/\operatorname{Nm}_{L'/K}\operatorname{C}_{L'}\cong A.
	\end{equation*}
	Since the isomorphism defines the action of $G$ on $A$, the rest just follows.
	
	(2):
	This is a corollary of the standard result:
	if $G$ is finite and $A$ is a $G$-module, then
	\begin{equation*}
		\lvert G\rvert\cdot H^n(G,A)=0
	\end{equation*}
	for all $n>0$.
	See \cite[I.1.6.1]{neukirch2013cohomology} for example.
\end{proof}
One of the reasons we need these preliminaries is the following corollary.
\begin{corollary}\label{cor: dihedral extensions and morphisms}
	Let $K$ be a fixed quadratic number field
	Let $C_m$ be the $C_2$-module defined by the action $g\cdot h=h^{-1}$ where $g$ is the generator of $C_2$ and $h\in C_m$.
	There is an injective map
	\begin{equation*}
		\{L\in\mathcal{S}\mid K=\hat{L}^{C_m}\}\rightarrow\operatorname{Sur}_{C_2}(\operatorname{C}_K,C_m),
	\end{equation*}
	where $\hat{L}$ means the Galois closure of $L$.
	In particular, if $m$ is odd, this is a one-to-one correspondence.
\end{corollary}
This follows from Proposition~\ref{prop: CFT and dihedral extensions}(1) and (2) directly.
It should be noted that because of our definition for $\mathcal{S}$, the quadratic number field $K$ is a subfield of some conjugate of $L$.
In other words we should call $K$ the associated quadratic number field of $L$.
In the sense of field-counting, this is of minor issue.

Because of this corollary, for a quadratic number field $K$, define
\begin{equation}\label{eqn: dihedral T(K)}
	\mathcal{T}_K:=\operatorname{Hom}_{C_2}(J_K^{S_\infty},C_m),
\end{equation}
where $C_m$ is the $C_2$-module defined by $g_2\cdot g_m=g_m^{-1}$ where $g_2$ is the generator of $C_2$ and $g_m$ is the generator of $C_m$.
Here $J_K$ is the id{\`e}les group of $K$, and $S_{\infty}$ is the set of primes at infinity.
Recall from Definition~\ref{def: product of ramified primes for morphisms} that we have a counting function on $\mathcal{T}_K$.
In particular, if $\rho:J_K^{S_\infty}\to C_m$ could be lifted to a map $\tilde{\rho}:\operatorname{C}_K\to C_m$, then
\begin{equation*}
	P(\tilde{\rho}):=P(\rho).
\end{equation*}
And in this case if $\hat{L}/K$ is the class field of $\tilde{\rho}$, then we have
\begin{equation*}
	P(L)=P(\hat{L})=P(K)P(\tilde{\rho}).
\end{equation*}
And this explains why we call the counting function $P$ for the $C_2$-morphisms the product of ramified primes.
In this way, $\mathcal{T}_K$ is equipped with the counting function $P$.
Now let's prove the theorem for the case when $m$ is a power of an odd prime.
\begin{proof}[Proof of Theorem~\ref{thm: estimate of Dq}]
	For each rational prime $p$, let $\mathcal{T}_{K_p,\Omega}$ be the subset of $\operatorname{Hom}_{C_2}(\mathscr{O}_p^*,C_m)$ such that a local $C_2$-morphism $\rho_p$ is contained in $\mathcal{T}_{K_p,\Omega}$ if and only if for some prime $\mathfrak{p}\mid p$ we have $\rho_{\mathfrak{p}}(\zeta_{\mathfrak{p}})\in\Omega$.
	Since $\Omega$ is closed under invertible powering and conjugation, we see that the definition of $\mathcal{T}_{K_p,\Omega}$ does not depend on the choice of $\mathfrak{p}$.
	Let
	\begin{equation*}
		\mathcal{T}_{K_p,\Omega}^c:=\operatorname{Hom}_{C_2}(\mathscr{O}_p^*,C_m)-\mathcal{T}_{K_p,\Omega}
	\end{equation*}
	be the complement.
	For each non-negative integer $r$, let $\mathcal{T}_{K,\Omega}^r$ be the subset of $\mathcal{T}_K$ such that a $C_2$-morphism $\rho$ is contained in $\mathcal{T}_{K,\Omega}^r$ if and only if
	\begin{equation*}
		\#\{p\nmid 2m\mid \rho_p\in\mathcal{T}_{K_p,\Omega}\}=r.
	\end{equation*}
	Clearly if $L\in\mathcal{S}_{\Omega}^r$ and $K$ is associated to $L$ in the sense that there exists a surjective $C_2$-morphism $\chi:\operatorname{C}_K\to C_m$ corresponding to the Galois closure $\hat{L}$ of $L/\mathbb{Q}$, then $\rho:=\prod_{p}\chi_p$ is contained in $\mathcal{T}_{K_p,\Omega}$.
	Let
	\begin{equation}\label{eqn: estimate of Dq 0}
		E_{\mathcal{T}_{K,\Omega}^r,P}(s):=\prod_{p}\bigl(\sum_{\rho\in\mathcal{T}^c_{K_p,\Omega}}p^{-c(\rho)s}\bigr)
		\sum_{
			\substack{
				n\text{ square-free}\\
				\gcd(m,n)=1,\omega(n)=r
			}
		}\prod_{p\mid n}\bigl(\sum_{\rho\in\mathcal{T}_{K_p,\Omega}}p^{-c(\rho)s}\bigr)
	\end{equation}
	be the series defined as in (\ref{eqn: estimate for F(r,s)}).
	Then Proposition~\ref{prop: counting homomorphisms} first implies that if we write $E_{\mathcal{T}_{K,\Omega}^r,P}(s)=\sum_{n=1}^{\infty}b_{K,n}n^{-s}$ in the form of a Dirichlet series, then
	\begin{equation}\label{eqn: estimate of Dq 1}
		N_{\mathcal{T}_{K,\Omega}^r,P}(X)\leq\sum_{n<X}b_{K,n}.
	\end{equation}
	Let
	\begin{equation*}
		N:=\max_{[K:\mathbb{Q}]=2}\max_{p}\{\lvert\operatorname{Hom}_{C_2}(\mathscr{O}_p^*,C_m)\rvert\},
	\end{equation*}
	where $K$ runs over quadratic number fields and $p$ runs over all rational primes (including infinity).
	The number $N$ exists because if $p\nmid 2m$, then $\lvert\operatorname{Hom}_{C_2}(\mathscr{O}_p^*,G)\rvert\leq m$.
	So the maximum is taken from finitely many local algebras.
	Note also that if $p$ is a finite prime, then 
	\begin{equation*}
		\#\{\rho\in\mathcal{T}_{K_p,\Omega}^c\mid c(\rho)=1\}=0,
	\end{equation*} 
	because $\Omega=C_m\backslash\{1\}$.
	Define
	\begin{equation}\label{eqn: estimate of Dq 2}
		E_{\mathcal{T}_{\Omega}^r,P}(s):=
		\prod_{p\mid m\infty}(1+Np^{-s})
		\sum_{
			\substack{
				n\text{ square-free}\\
				\omega(n)=r
			}
		}\prod_{p\mid n}(1+Np^{-s}),
	\end{equation}
	and write $E_{\mathcal{T}_{\Omega}^r,P}(s)=\sum_{n=1}^\infty a_nn^{-s}$.
	By comparing the local factors of (\ref{eqn: estimate of Dq 0}) and (\ref{eqn: estimate of Dq 2}), for each quadratic number field $K$ and for each $n\geq1$, we have
	\begin{equation}\label{eqn: estimate of Dq 3}
		a_n\geq b_{K,n}.
	\end{equation}
	On the other hand, for a continuous homomorphism $\rho:J_K^{S_\infty}\to C_m$, there are at most $\lvert\operatorname{Hom}(\operatorname{Cl}_K,C_m)\rvert$ different lifting to a map $\tilde{\rho}:\operatorname{C}_K\to C_m$.
	Using Corollary~\ref{cor: dihedral extensions and morphisms}, we see that
	\begin{equation}\label{eqn: dihedral relation between S and T}
		N_{\mathcal{S}_{\Omega}^r,P}(X)\leq 
		\sum_{
			\substack{
				[K:\mathbb{Q}]=2\\
				P(K)<X
			}
		}
		\lvert\operatorname{Hom}(\operatorname{Cl}_K,C_m)\rvert
		N_{\mathcal{T}_{K,\Omega}^r,P}(X/P(K)).
	\end{equation}
	Together with (\ref{eqn: estimate of Dq 1}) and (\ref{eqn: estimate of Dq 3}), we see that
	\begin{equation*}
		N_{\mathcal{S}_{\Omega}^r,P}(X)\leq 
		\sum_{
			\substack{
				[K:\mathbb{Q}]=2\\
				P(K)<X
			}
		}
		\lvert\operatorname{Hom}(\operatorname{Cl}_K,C_m)\rvert
		\sum_{n<X/P(K)}a_n.
	\end{equation*}
	Note that $E_{\mathcal{T}_{\Omega}^r,P}(s)$ is independent of the choice of $K$.
	Let
	\begin{equation*}
		H(X):=\sum_{
			\substack{
				[K:\mathbb{Q}]=2\\
				P(K)<X
			}
		}\lvert\operatorname{Hom}(\operatorname{Cl}_K,C_m)\rvert
		\quad\text{and}\quad
		A(X):=\sum_{n<X}a_n.
	\end{equation*}
	The asymptotic behaviour of $A(X)$ could be deduced from Proposition~\ref{prop: estimate of counting homomorphisms}.
	Recall that we have assumed that the Cohen-Lenstra Heuristics is true, or in this case, we take the following result as our condition: 
	as $X\to\infty$, we have
	\begin{equation*}
		H(X)\ll X.
	\end{equation*}	
	In terms of Riemann-Stieltjes integral (see Montgomery and Vaughan~\cite[Appendix A]{montgomery2006multiplicative} for example), as $X\to\infty$, we have
	\begin{equation*}
		\begin{aligned}
			N_{\mathcal{S}_{\Omega}^r,P}(X)
			\ll&\sum_{n<X}a_n\frac{X}{n}
			\\
			=&\int_{1^+}^X\frac{X}{t}\operatorname{d}A(t)
			\\
			=&\frac{X}{t}A(t)\Big\vert_{1^+}^X+X\int_{1^+}^X\frac{A(t)}{t^2}\operatorname{d}t
			\\
			\ll&X(\log\log X)^{r+1}.
		\end{aligned}
	\end{equation*}
	And the result just follows.
\end{proof}
Then we deal with Theorem~\ref{thm: D4 counting fields P}.
We use the similar method as above to prove it.
\begin{proof}[Proof of Theorem~\ref{thm: D4 counting fields P}]
	The proof is similar to that of Theorem~\ref{thm: estimate of Dq}, except that in this case we have the following
	\begin{equation*}
		\sum_{
			\substack{
				K=\mathbb{Q}(\sqrt{d})\\
				P(K)<X
			}
		}\lvert\operatorname{Hom}(\operatorname{Cl}_K,C_4)\rvert\sim c_2X\log X,
	\end{equation*}
	as $X\to\infty$,
	where $c_2$ is a positive constant.
	In other words, ``most'' homomorphisms are not surjective, and the main term is determined by the $2$-rank of $\operatorname{Cl}_K$.
	See Alex Smith~\cite{smith2022distribution} for details on the distribution of $\operatorname{Cl}_K[2^\infty]$ when $K$ runs over quadratic number fields.
	Again, using the notations of $\mathcal{T}_{K_p,\Omega}$ and $\mathcal{T}_{K_p,\Omega}^c$ and $\mathcal{T}_{K,\Omega}^r$ from the proof of Theorem~\ref{thm: estimate of Dq}, one can define the series $E_{\mathcal{T}_{K,\Omega}^r,P}(s)$ as in (\ref{eqn: estimate of Dq 0}).
	Write it as in the form of a Dirichlet series $E_{\mathcal{T}_{K,\Omega}^r,P}(s)=\sum_{n=1}^{\infty}b_{K,n}n^{-s}$.
	Note that the inequality
	\begin{equation*}
		N_{\mathcal{S}_{\Omega}^r,P}(X)
		\leq\sum_{
			\substack{
				[K:\mathbb{Q}]=2\\
				P(K)<X
			}
		}\lvert\operatorname{Hom}(\operatorname{Cl}_K,C_m)\rvert
		\sum_{n<X}b_{K,n}
	\end{equation*}
	is still true in this case by the proof of Theorem~\ref{thm: estimate of Dq}.
	Define in this case the series
	\begin{equation*}
		E_{\mathcal{T}_{\Omega}^r,P}(s)=
		\prod_{p\mid 2\infty}(1+Np^{-s})
		\prod_{p\nmid 2\infty}(1+p^{-s})
		\sum_{
			\substack{
				n\text{ square-free}\\
				\omega(n)=r
			}
		}\prod_{p\mid n}(1+Np^{-s}),	
	\end{equation*}
	where $N$ is a large enough number such that 
	\begin{equation*}
		N\geq\max_{[K:\mathbb{Q}]=2}\max_{p}\{\#\operatorname{Hom}_{C_2}(\mathscr{O}_p^*,C_4)\}.
	\end{equation*}
	Write it also in the form of a Dirichlet series: $E_{\mathcal{T}_{\Omega}^r,P}(s)=\sum_{n=1}^{\infty}a_nn^{-s}$.
	The comparison between $E_{\mathcal{T}_{K,\Omega}^r,P}(s)$ and $E_{\mathcal{T}_{\Omega}^r,P}(s)$ shows that for each quadratic number field and for each $n\geq1$, we have that
	\begin{equation*}
		a_n\geq b_{K,n}.
	\end{equation*}
	So we have that
	\begin{equation*}
		N_{\mathcal{S}_{\Omega}^r,P}(X)
		\leq\sum_{
			\substack{
				[K:\mathbb{Q}]=2\\
				P(K)<X
			}
		}\lvert\operatorname{Hom}(\operatorname{Cl}_K,C_m)\rvert
		\sum_{n<X/P(K)}a_n.
	\end{equation*}
	Note that $E_{\mathcal{T}_{\Omega}^r,P}(s)=\sum_{n=1}^{\infty}a_nn^{-s}$ is independent of the choice of $K$.
	And the asymptotic behaviour of $\sum_{n<X}a_n$ could be deduced from Proposition~\ref{prop: estimate of counting homomorphisms}.
	So it reduces to the following computation.
	Write 
	\begin{equation*}
		A(X):=\sum_{n<X}a_n
		\quad\text{and}\quad 
		H(X):=\sum_{
			\substack{
				[K:\mathbb{Q}]=2\\
				P(K)<X
			}
		}\lvert\operatorname{Hom}(\operatorname{Cl}_K,C_4)\rvert.
	\end{equation*}
	Let $M$ be a fixed constant such that for all $X>M$ we have
	\begin{equation*}
		H(X)\leq 2c_2X\log X\quad\text{and}\quad A(X)\leq d_rX(\log\log X)^r,
	\end{equation*}
	where $d_r$ is a positive constant.
	The existence of such a number $M$ follows from our assumption on $H(X)$ and the asymptotic behaviour of $A(X)$.
	Then for large $X$, we have
	\begin{equation*}
		\begin{aligned}
			&\sum_{
				\substack{
					[K:\mathbb{Q}]\\
					P(K)<X
				}
			}\lvert\operatorname{Hom}(\operatorname{Cl}_K,C_4)\rvert
			\sum_{n<X/P(K)}a_n
			\\
			=&\sum_{n<X}a_n
			\sum_{
				\substack{
					[K:\mathbb{Q}]=2\\
					P(K)<X/n
				}
			}\lvert\operatorname{Hom}(\operatorname{Cl}_K,C_4)\rvert
			\\
			\ll&\sum_{X/M\leq n<X}a_n H(X/n)+\sum_{n<X/M}a_n\cdot\frac{X}{n}\log X
			\\
			\ll&\sum_{n<X}a_n+\int_{1^+}^{X/M}\frac{X}{t}\log X\operatorname{d}A(t)
			\\
			\ll&X(\log\log X)^r+\frac{X}{t}\log X A(t)\Big\vert_{1^+}^{X/M}+X\log X\int_{1^+}^{X/M}\frac{A(t)}{t^2}\operatorname{d}t
			\\
			\ll&X\log X(\log\log X)^r+X\log X\cdot\Biggl[\Bigl(\int_{1^+}^M+\int_{M}^{X/M}\Bigr)\frac{A(t)}{t^2}\operatorname{d}t\Biggr]
			\\
			\ll&X(\log X)^2(\log\log X)^{r+1}.
		\end{aligned}
	\end{equation*}
\end{proof}
\begin{remark}
	Note that the computation under some heuristics including those of Cohen-Lenstra and Malle-Bhargava shows that when ordered by the conductor $C$ introduced in Section~\ref{section:D4 extensions} for $D_4$-fields, it is not likely that we can prove a result of zero-probability of $\operatorname{rk}_2 2\operatorname{Cl}_L\leq r$.
	Though even the heuristics hold in this case, the theories we've introduced here can only imply that
	\begin{equation*}
		\mathbb{E}_{\mathcal{S},C}(\lvert\operatorname{Hom}(2\operatorname{D}^{D_4}_L,C_2)\rvert)<\infty,
	\end{equation*}
	where $\operatorname{D}_L^{D_4}\subseteq\operatorname{Cl}_L$ (see also Definition~\ref{def:invariant part of class groups}).
	Still results of this section indicates that ordering fields by the product of ramified primes keeps some possibility for us to prove the statistical results related to the infinite $C_2$-moments on the higher torsion part of the class group.
\end{remark}

\section*{Acknowledgement}
I would like to thank Melanie Wood for many useful conversations related to this work.
I would like to thank Yuan Liu for comments that helped improve the study such that the related results could be generalized to relative class groups.
I would like to thank Alex Bartel, Aaron Landesman, Fabian Gundlach, and Michael Kural for many useful discussions related to this paper.

\section*{Conflict of Interest Statement}
The author was supported by National Science Foundation grants DMS-2052036 and DMS-2140043 when the related work started, and now by Jiangsu Funding Program for Excellent Postdoctoral Talent 2023ZB064.
The author declare that there is no potential conflict.

\section*{Data Availability Statement}
The author claims that the data supporting the study of this paper are available within the article.

\bibliographystyle{plain}
\bibliography{references}


\end{document}